\xpatchcmd{\proof}{\itshape}{\prooflabelfont}{}{}
\newcommand{\prooflabelfont}{\bfseries}
\newcommand\thankssymb[1]{\textsuperscript{\@fnsymbol{#1}}}
\newtheorem{Theorem}{Theorem}[section]
\newtheorem{Definition}{Definition}
\newtheorem{Proposition}[Theorem]{Proposition}
\newtheorem{Lemma}[Theorem]{Lemma}
\newtheorem{Remark}[Theorem]{Remark}
\newtheorem{Hypothesis}{Hypothesis}
\def\a{\alpha}
\def\la{\lambda}
\def\R{\mathbb R}
\def\N{\mathbb N}
\def\E{\mathbb E}
\def\P{\mathbb P}
\def\S{\mathbb S}
\def\F{\mathcal{F}}
\def\Fs{\mathcal{F}_s}
\def\O{\mathcal O}
\def\<{\langle}
\def\>{\rangle}
\def\Oc{\bar{\mathcal{O}}}
\def\ds{\displaystyle}
\def\e{\epsilon}
\xpatchcmd{\proof}{\itshape}{\prooflabelfont}{}{}
\DeclareMathOperator{\sign}{sign}
\title{Nonlinear random perturbations of  Reaction Diffusion Equations}
\author[S. Cerrai]{Sandra Cerrai\thankssymb{1}}
\address[S. Cerrai]{Department of Mathematics\\
University of Maryland\\ 
4176 Campus Drive, College Park, MD 20742, United States}
\thanks{\thankssymb{1} Partially supported by the
NSF Grant DMS-2348096 (2024-2027), {\em 
 Multiscale Analysis of Infinite-Dimensional Stochastic Systems}}
\email{cerrai@umd.edu}
\author[G. Guatteri]{Giuseppina Guatteri\thankssymb{2}}
\address[G. Guatteri]{Dipartimento di Matematica F. Brioschi\\
Politecnico di Milano\\
via Bonardi 9, 20133 Milano, Italy}
\thanks{\thankssymb{2}Partially supported by GNAMPA Grant, {\em Riduzione di modelli in sistemi stocastici in dimensione infinita a due scale temporali}}
\email{giuseppina.guatteri@polimi.it}
\author[G. Tessitore]{Gianmario Tessitore\thankssymb{2}}
\address[G. Tessitore]{
Dipartimento di Matematica e Applicazioni\\
Universit\`a di Milano-Bicocca\\
Via Roberto Cozzi 55, 20125 Milano, Italy}
\email{gianmario.tessitore@unimib.it}
\subjclass[2010]{}
\begin{document}

 \begin{abstract} 
 This paper investigates the well-posedness and small-noise asymptotics of a class of stochastic partial differential equations defined on a bounded domain of $\mathbb{R}^d$, where the diffusion coefficient depends nonlinearly and non-locally on the solution through a conditional expectation. The reaction term is assumed to be merely continuous and to satisfy a quasi-dissipativity condition, without requiring any growth bounds or local Lipschitz continuity. This setting introduces significant analytical challenges due to the temporal non-locality and the lack of regularity assumptions. Our results represent a substantial advance in the study of nonlinear stochastic perturbations of SPDEs, extending the framework developed in \cite{CGT}.

 \end{abstract}

 \maketitle
 
\tableofcontents
 
\section{Introduction} 
\label{sec2}

The study of stochastic perturbations of deterministic dynamical systems has long been a central theme in the analysis of random phenomena across infinite-dimensional settings, particularly in the context of stochastic partial differential equations (SPDEs). When these perturbations are small, large deviations theory provides a powerful framework for understanding the asymptotic behavior of trajectories and the structure of rare events. While this theory has been extensively developed in both finite and infinite dimensions in the linear case, its extension to quasi-linear frameworks remains a challenging and largely open area of research.

In their paper \cite{FK},  Freidlin and Koralov  consider a class of stochastic perturbations of finite-dimensional dynamical systems, and study the long-time behavior of associated quasi-linear parabolic equations of the form

$$
\partial_t u^\epsilon(t,x) = \frac{\epsilon}{2} \sum_{i,j=1}^d a_{ij}(x,u^\epsilon(t,x)) \partial_{ij} u^\epsilon(t,x) + \sum_i b_i(x) \partial_i u^\epsilon(t,x),
 $$
 coupled with the backward interpretation of the solution via a nonlinearly perturbed diffusion\footnote{\ Here, for every $x \in\,\mathbb{R}^d$, we denoted by $a(x)$ the matrix $(\sigma\sigma^\star)(x)$.}
$$
dX_\epsilon(s) = b(X_\epsilon(s)) ds + \sqrt{\epsilon}\, \sigma(X_\epsilon(s), u^\epsilon(t-s, X_\epsilon(s))) dB_s,
$$
which leads naturally to a nonlinear semigroup formulation. Their analysis relies on classical PDE techniques in finite dimensions and yields sharp asymptotic characterizations of solutions on exponential time scales via the quasi-potential associated with the large deviations principle.

\smallskip

Our previous paper \cite{CGT} represents the first step toward the extension of this theory to infinite-dimensional settings, particularly separable Hilbert spaces. In \cite{CGT} we were concerned with the analysis of quasi-linear parabolic PDEs with small viscosity, where the second-order term in the PDE is nonlinearly dependent on the solution itself. The corresponding stochastic dynamics are governed by SPDEs with small  multiplicative noise and feedback through the solution of the PDE. In contrast to the classical (linear) SPDE-PDE duality, the coupling here is fully nonlinear and requires interpreting the SPDE in terms of a forward-backward stochastic differential system.

Our primary goal was studying the well-posedness of quasi-linear PDEs of the form

\begin{equation}\label{cgt3-intro}
\partial_t u^\epsilon(t,x) = \frac{\epsilon}{2} \operatorname{Tr}[\sigma\sigma^\star(x,u^\epsilon(t,x)) D^2_x u^\epsilon(t,x)] + \langle Ax + f(x), D_x u^\epsilon(t,x) \rangle_H,
\end{equation}
defined on a Hilbert space $H$, and relating this PDE to the stochastic evolution equation
\begin{equation}\label{cgt2-intro} dX_\epsilon(s) = (AX_\epsilon(s) + f(X_\epsilon(s))) ds + \sqrt{\epsilon} \,\sigma(X_\epsilon(s), u^\epsilon(t-s, X_\epsilon(s))) dW_s,
 \end{equation}
where $W_s$, $s \in\,[0,t]$, is a space-time white noise, defined on the stochastic basis $(\Omega, \mathcal{F}, \{\mathcal{F}_s\}_{s \in\,[0,t]}, \mathbb{P})$. As in \cite{FK}, we showed that the nonlinear PDE and the SPDE are connected through a nonlinear Feynman-Kac-type formula, in the form

 $$
 u^\epsilon(t,x) = \mathbb{E}[g(X^\epsilon(t,x))] =: T^\epsilon_t g(x),
 $$
where $T^\epsilon_t$ is now a nonlinear semigroup.

 A major technical challenge in the infinite-dimensional setting lies in the lack of a comprehensive theory of deterministic quasi-linear PDEs in Hilbert spaces. Unlike the finite-dimensional case, where viscosity solutions or Schauder estimates can be employed to establish classical solvability, the infinite-dimensional case requires more delicate functional analytic techniques. We therefore built on the abstract framework for infinite-dimensional Kolmogorov equations developed in \cite{DPZ2}, and extended it to deal with nonlinear second-order terms of the form 
 \begin{equation}\label{cgt1-intro}\sigma^\star\sigma(x,r)=Q+\delta\,\lambda(x,r),\ \ \ \ x  \in\,H,\ \ r \in\,\mathbb{R}
\end{equation}
for some bounded and non-negative symmetric  operator $Q$, a  continuous mapping $\lambda$ defined on $H\times \mathbb{R}$ with values in the space of trace-class operators and a constant $\delta>0$, which we took small enough.

Another contribution of \cite{CGT} was the derivation of a large deviations principle for the law of the trajectories of the solution $X^\epsilon$ of the stochastic system. Under the small-noise regime $\epsilon \downarrow 0$, we proved that the family of laws $\{\mathcal{L}(X^\epsilon)\}_{\epsilon > 0}$ satisfies a large deviations principle on $C([0,t];H)$ with an action functional defined in terms of a suitable controlled deterministic dynamics. This result forms the first step for future studies on exit times, metastability, and invariant measure concentration in infinite-dimensional systems with nonlinear feedback.

\smallskip

Now, as a consequence of  the Markov property, the following relation holds between the solution $u^\e$ of the quasi-linear PDE \eqref{cgt3-intro} and the solution $X^{t,x}_\e$ of the SPDE \eqref{cgt2-intro}
$$u_\e(t-s,X_\e^{t,x}(s)))=\mathbb{E}(g(X_\e^{t-s,y}(t-s)))\big \vert_{y=X_\e^{t,x}(s)}=\mathbb{E}(g(X_\e^{t,x}(t))\vert \mathcal{F}_s),$$
for every $s \in\,[0,t]$ and $x \in\,H$. In particular, 
equation \eqref{cgt2-intro} can be written as
\begin{equation}
\label{stoch-pde.intro1}
\left\{\begin{array}{l}
\ds{dX_\e^{t,x}(s)=\left(A	X_\e^{t,x}(s)+f(X_\e^{t,x}(s))\right)\,ds+\sqrt{\e}\,\sigma(X_\e^{t,x}(s),\mathbb{E}(g(X_\e^{t,x}(t))\vert \mathcal{F}_s))\,dW_s,}\\[10pt]
\ds{X_\e^{t,x}(0)=x.}
	\end{array}\right.
\end{equation}
In the new formulation \eqref{stoch-pde.intro1} there is no mention to $u^\e$, so that  we can study this class of nonlinear perturbations of non-linear PDEs directly, without coupling them to quasi-linear PDEs Hilbert spaces such as \eqref{cgt3-intro}. The advantage of this approach is that in our analysis we can now consider a much more general class of non-linearities $f$ and $\sigma$ and linear operators $A$ and we do not need any non-degenaracy condition on the noise, so that we can deal with any space dimension.

Although equation \eqref{stoch-fbsde.intro} is meaningful on its own and will, in fact, be analyzed directly without further transformation, it is insightful to interpret it within the framework of Forward-Backward Stochastic Differential Equations (FBSDEs). Specifically, when the noise is non-degenerate, one can define
\[
Y^{t,x}_\e(s) := \mathbb{E}\left(g(X_\varepsilon^{t,x}(t)) \mid \mathcal{F}_s\right),
\]
and thereby reformulate \eqref{stoch-pde.intro1} as the following coupled infinite-dimensional stochastic system:
\begin{equation}
\label{stoch-fbsde.intro}
\left\{
\begin{array}{l}
\displaystyle dX_\varepsilon^{t,x}(s) = \big(A X_\varepsilon^{t,x}(s) + f(X_\varepsilon^{t,x}(s))\big)\,ds + \sqrt{\varepsilon}\,\sigma(X_\varepsilon^{t,x}(s), Y^{t,x}_\e(s))\,dW_s, \quad 0 \leq s \leq t, \\[10pt]
\displaystyle -dY^{t,x}_\e(s) = Z^{t,x}_\e(s) Q^{-1/2} dW_s, \quad 0 \leq s \leq t, \\[10pt]
\displaystyle Y^{t,x}_\e(t) = g(X_\varepsilon^{t,x}(t)), \\[10pt]
\displaystyle X_\varepsilon^{t,x}(0) = x.
\end{array}
\right.
\end{equation}
Here, $Y_\epsilon$ and $Z_\epsilon$ take values in appropriate infinite-dimensional state spaces, and $Q$ denotes the covariance operator of the Wiener process $W_s$.

The solvability of strongly coupled forward-backward systems is not guaranteed, either in finite or infinite dimensions; see \cite{Antonelli} for counterexamples. In finite dimensions, well-posedness can be achieved, either via fixed-point arguments under restrictive quantitative conditions on the coefficients (see  \cite{PardouxTang}) or, in wider generality generally, by studying the associated Kolmogorov equations through the classical theory of PDEs, as exemplified by the celebrated {\em Four-Step Scheme} in \cite{MaProYong}, \cite{MaYong}. Other results in specific situations can be obtained by the {\em Continuation Method}, see \cite{Yong97}.

In infinite dimensions, the range of applicable methods is more limited, as many analytic tools do not carry over. In this context, energy estimate techniques under quantitative constraints remain the only viable approaches. 

In the present work, since our focus lies on the analysis of small perturbations, we exploit the small parameter $\varepsilon$ that naturally appears in the coupling term of the FBSDE system. When $\varepsilon$ is sufficiently small, the quantitative conditions stated in \cite{PardouxTang} are automatically satisfied. This allows us to establish the existence and uniqueness of a solution to equation \eqref{stoch-pde.intro1} by applying a contraction principle within a suitably chosen functional framework. It is important to note, however, that the admissible size of $\varepsilon$ may depend on the terminal time $t$.

In this paper, we  deal with a class of SPDEs on a bounded and smooth domain $\mathcal{O}\subset \mathbb{R}^d$, with $d\geq 1$, such as
\begin{equation}
\label{SPDE-intro}
\left\{\begin{array}{l}
\ds{\partial_s X(s,\xi)	=\mathcal{A}X(s,\xi)+f(X(s,\xi))+\sqrt{\epsilon}\,\sigma(X(s),\mathbb{E}\left(g(X(t))|\mathcal{F}_s)\right)(\xi)\,\partial_sW_s(s,\xi),}\\[10pt]
\ds{X(0,\xi)=x(\xi),\ \ \ \ \ \ \ X(s,\xi)=0,\ \ \xi \in\,\partial\mathcal{O},}	
\end{array}\right.
\end{equation}
where $\mathcal{A}$ is a uniformly elliptic second order differential operator, and $W_s$ is a cylindrical Wiener process in $L^2(\mathcal{O})$, which is white in time and appropriately colored in time, in case of dimension $d\geq 2$. As far as the coefficient $f$, we drop the Lipschitz-continuity condition we assumed in \cite{CGT}, and we assume that $f:\mathbb{R}\to \mathbb{R}$ is a continuous function such that
\begin{equation}\label{cgt10-intro}\begin{split}
      & f(u_2)-f(u_1) \leq k\, (u_2-u_1),\ \ \ \ \ \ \ \  u_1\leq u_2, 
    \end{split}	
	\end{equation}
	for some constant $k\in \mathbb{R}$. In particular, we do not even assume $f$ to be locally Lipschitz-continuity nor we impose any restriction on its growth. For the coefficients $g$ and $\sigma$, the model we have in mind is
	\[\sigma(x,y,r)(\xi)=\mathfrak{s}(\xi,x(\xi),y(\xi),r),\ \ \ \ \ \xi \in\,\bar{\mathcal{O}},\ \ \ \ x, y \in\,C(\bar{\mathcal{O}}),\ \ r \in\,\mathbb{R},\]
	and
	\[g(x)(\xi)=(\mathfrak{g}_1(\xi,x(\xi)),\mathfrak{g}_2(x)),\ \ \ \ \ \ \xi \in\,\bar{\mathcal{O}},\ \ \ \ x \in\,C(\bar{\mathcal{O}}),\]
for some continuous functions $\mathfrak{g}_1:\Oc\times \mathbb{R}\to\mathbb{R}$,   $\mathfrak{g}_2: C(\bar{\mathcal{O}})\to \mathbb{R}$, and $\mathfrak{s}:\Oc\times \mathbb{R}^3\to\mathbb{R}$, which satisfy suitable Lipschitz continuity properties.

A common strategy when dealing with deterministic or stochastic PDEs whose nonlinearities are only locally Lipschitz continuous and locally bounded is to localize the coefficients (see, e.g., \cite{CL}) or to construct solutions locally in time. In both approaches, the availability of uniform estimates for the solutions can then be used to establish global existence. However, this approach cannot be applied in the present setting due to the special structure of the diffusion coefficient
\[x \in\,C([0,t];C(\bar{\mathcal{O}}))\mapsto \sigma(x(s),\mathbb{E}(g(x(t))|\mathcal{F}_s)),\ \ \ \ \ \ s \in\,[0,t],\]
which is {\em non-local in time}, as it depends on the value of of the  $(x(s))_{s \in\,[0,t]}$ at a fixed (final) time $t>0$.
To address this difficulty, we adopt an alternative method inspired by a result introduced by M. Salins in \cite{Salins}, in the case of stochastic reaction diffusion defined on the whole space $\mathbb{R}^d$. In \cite{Salins}, for every $z\in C([0,t];C(\bar{\mathcal{O}}))$ Salins proposes to consider the equation
\begin{equation}
    \label{SalinsMild-intro}
    u(s,\xi)=\int_0^s S(s-r)f(u(r))(\xi)dr+z(s,\xi),\ \ \ \ \ \ \xi \in\,\bar{\mathcal{O}},
\end{equation}
where $S(s)$ denotes the transition semigroup associated with the realization of the operator $\mathcal{A}$, under Dirichlet boundary conditions. His key observation is that this equation  admits a unique solution  $\mathcal{M}_t(z) \in\,C([0,t];C(\bar{\mathcal{O}}))$, and -- despite  the non-Lipschitz nature of $f$ -- the mapping $\mathcal{M}_t$ is Lipschitz continuous on $C([0,t];C(\bar{\mathcal{O}}))$. Thus, defining for every $Z \in\, L^p(\Omega;C([0,t];C(\bar{\mathcal{O}})))$   
\[\gamma_t(Z)(s)=\int_0^s S(s-r) \sigma(Z(r),  \E(g(Z(t))|\F_r))\,dW_r,\ \ \ \ \ \ s \in\,[0,t],\]
in our case we obtain that a process $X \in\,C([0,t];C(\bar{\mathcal{O}}))$ is a mild solution to equation \eqref{SPDE-intro} if and only if it is a fixed point of the mapping
\[X \in\,C([0,t];C(\bar{\mathcal{O}}))\mapsto \mathcal{M}_t(\sqrt{\e}\,\gamma_t(X)+S(\cdot)x) \in\,C([0,t];C(\bar{\mathcal{O}})).\]
Therefore, if one can prove that  $\gamma_t$ is Lipschitz-continuous in  $L^p(\Omega;C([0,t];C(\bar{\mathcal{O}})))$, for sufficiently large  $p\geq 1$, then -- thanks  to the Lipschitz continuity of $\mathcal{M}_t$ --  the composed mapping is a contraction for small enough $\e>0$,  which ensures the existence and uniqueness of a mild solution to equation \eqref{SPDE-intro}.

\smallskip

In the same spirit of what we have done in \cite{CGT}, once the existence  and uniqueness of a mild solution $X^{t,x}_\e$ in $C([0,t];C(\bar{\mathcal{O}}))$ has been established, for every $x \in\,C(\bar{\mathcal{O}})$ and every $\epsilon\leq \e_t$ sufficiently small,  our goal is  to investigate the validity of a large deviation principle for the family  $\{\mathcal{L}(X^{t,x}_\e)\}_{\e \in\,(0,\e_t)}$. 
For every $\rho \in\,[0,t]$ and $y \in\,C(\bar{\mathcal{O}})$ we introduce the problem
\[
Y_\rho^y(s)=\int_\rho^s S(s-r)f(Y_\rho^y(r))dr
+ S(s-\rho)y, \ \ \ \ \ s\in\,[\rho,t].
\]
and we denote
\[ v_{t}(\rho,y) =  g(Y_\rho^{y}(t)),\ \ \ \ \ \ \rho \in\,[0,t],\ \ \ \ y \in\,C(\bar{\mathcal{O}}).\]
The mapping $Y^y_\rho$ depends continuously on $\rho \in\,[0,t]$. Moreover, the Lipschitz continuity of the mapping $\mathcal{M}_t$, as discussed above, implies that $Y^y_\rho$ depends Lipschitz continuously on the initial condition  $y \in\, C(\bar{\mathcal{O}})$, uniformly with respect to $\rho \in\,[0,t]$. Altogether, this implies that the mapping  $v_t:[0,t]\times C(\bar{\mathcal{O}})\to \mathbb{R}$ is continuous and Lipschitz-continuous in the second variable.
In particular, if for every $\varphi \in\,L^2(0,t;H_0)$ -- here $H_0$ is the reproducing kernel Hilbert space of the noise $W_s$ -- we introduce the problem
\begin{equation}
\label{sbm2-bis-main-intro}
\partial_s X(s)=\mathcal{A}	X(s)+f(X(s))+\sigma(X(s),  v_{t}(s,X(s)))\varphi(s),\ \ \ \ X(0)=x,	\end{equation}
we have that it admits a unique solution  $X^{t,x,\varphi} \in\,C([0,t];\bar{\mathcal{O}}))$. 
All this allows us to define
\[I_{t,x}(X):=\frac 12\,\inf\left\{\int_0^t\vert \varphi(s)\vert_{H_0}^2\,ds\,:\, X(s)=	X^{t,x,\varphi}(s),\ s \in\,[0,t]\right\}.\]
In what follows, we will prove that the family  
$\{\mathcal{L}(X^{t,x}_\e)\}_{\e \in\,(0,\e_t)}$ satisfies a large deviation principle in  $C([0,t];\bar{\mathcal{O}}))$, with  action functional $I_{t,x}$.

As well know, once the compactness of the level sets of $I_{t,x}$ is established, the large deviation principle is equivalent to the Laplace principle. Our strategy, therefore,  is to prove that the family  
$\{\mathcal{L}(X^{t,x}_\e)\}_{\e \in\,(0,\e_t)}$ satisfies a Laplace principle in  $C([0,t];\bar{\mathcal{O}}))$, with  action functional $I_{t,x}$.
To this purpose, following the general scheme in \cite{bdm} we consider the map $\mathcal{G}(x, \sqrt{\e} \,W)$ that connects the initial datum $x$ and the trajectories of the noise to the solution $X^{t,x}_{\e}$ of equation \eqref{stoch-pde.existence}. 
For every $t>0$ and  $M>0$, we introduce the set
\[\Lambda_{t,M}:=\left\{ \varphi \in\,\mathcal{P}_t\ :\ \vert \varphi\vert_{L^2(0,t;H_0)}\leq M,\ \mathbb{P}-\text{a.s.}\right\}\]
where $\mathcal{P}_t$ is the set of predictable processes in $L^2(\Omega;L^2(0,t;H_0))$.
Thus, given $\varphi \in \Lambda_{t,M}$ we  consider the process  $\mathcal{G}(x, \sqrt{\e}\, W+\int_0^{\cdot} \varphi(s) ds)$ with perturbed noise, that is the solution $X_{\e}^{t,x,\varphi}$ of the equation \begin{equation}
\label{stoch-pde.eps.LDP.1-intro}
\left\{\begin{array}{l}
\begin{array}{l}
\ds{\partial_s X_{\e}^{t,x,\varphi}(s)= \mathcal{A}	X_{\e}^{t,x,\varphi}(s)+F(X_{\e}^{t,x,\varphi}(s))}\\[10pt] 
\ds{ \quad \quad \quad \quad \quad \quad \quad \quad  +\sqrt{\e}\,\sigma(X_{\e}^{t,x,\varphi}(s),\mathbb{E}^{\e,\varphi}(g(X_{\e}^{t,x,\varphi}(t))|\mathcal{F}_s))\,\partial_s W_s^{\e, \varphi},}\\[8pt]
\ds{X_{\e}^{t,x,\varphi}(0)=x.}
	\end{array}	
\end{array}\right.
\end{equation}
where $\mathbb{E}^{\epsilon,\varphi}$ denotes the (conditional) expectation with respect to the probability $\mathbb{E}^{\epsilon, \varphi}$ under which  
\[W^{\e, \varphi}_s:=W_s+\frac{1}{\sqrt{\e} }\, \int_0^s\varphi(r)\,dr,\ \ \ \ \ \ s \in\,[0,t],\] is a Wiener process with covariance $Q$.
We notice that by the Markovianity and the uniqueness  of the solution to equation \eqref{SPDE-intro} we have
$$\mathbb{E}^{\e,\varphi}(g(X_{\e}^{t,x,\varphi}(t))|\mathcal{F}_s)=v_{t,\e}(s,X_{\e}^{t,x,\varphi}(s)),$$ where 
\begin{equation}\label{bms22-bis-intro}v_{t,\e}(\rho,y):=  \E(g(X^{t,y}_{\e,\rho}(t)),\ \ \ \ \ \ 0\leq \rho < t,\ \ \ y \in\,E,\end{equation}
 and $X^{t,y}_{\e,\rho}$ is the solution of \eqref{SPDE-intro} starting at time $\rho$ from $y$, namely
 \begin{equation*}
\begin{array}{l}
\ds{	X^{t,y}_{\e,\rho}(s)=S(s-\rho)y+\int_\rho^s S(s-r)f(X^{t,y}_{\e,\rho}(r))dr
}\\[14pt]
\ds{ \quad \quad \quad \quad \quad \quad \quad \quad  \quad +\sqrt{\e}\,\int_\rho^s S(s-r) \sigma(X^{t,y}_{\e,\rho}(r),  \E(g(X^{t,y}_{\e,\rho}(t))|\F_r))\,dW_r.}
\end{array}
\end{equation*} 
Consequently, for every $\varphi \in \Lambda_{t,M}$ and $\e>0$,  the process given by $\mathcal{G}(x, \sqrt{\e}\, W+\int_0^{\cdot} \varphi(s) ds)$  can be represented as the solution to the controlled equation
\begin{equation*}
\label{stoch-pde.eps.LDP-intro}
\left\{\begin{array}{l}
\begin{array}{l}
\ds{\partial_s X_{\e}^{t,x,\varphi}(s)= \mathcal{A}	X_{\e}^{t,x,\varphi}(s)+f(X_{\e}^{t,x,\varphi}(s))+ \sigma(X_{\e}^{t,x,\varphi}(s),v_{t,\e}(s,X_{\e}^{t,x,\varphi}(s)))\varphi(s)}\\[10pt] 
\ds{ \quad \quad \quad \quad \quad \quad \quad \quad \quad \quad \quad  +\sqrt{\e}\,\sigma(X_{\e}^{t,x,\varphi}(s),v_{t,\e}(s,X_{\e}^{t,x,\varphi}(s))) \,\partial_sW_s,}\\[8pt]
\ds{X_{\e}^{t,x,\varphi}(0)=x.}
	\end{array}	
\end{array}\right.
\end{equation*}
Then, the  Laplace principle will follows once we will prove that 
\[\lim_{\e\to 0} X_\e^{t,x,\varphi_{\e}}=X^{t,x,\varphi},\ \ \ \ \text{in distribution in}\ \ \ C([0,t],C(\bar{\mathcal{O}})),\]
for every sequence $\{\varphi_\epsilon\}_{\epsilon>0}$ in $ \Lambda_{t,M}$ such that 
\[\lim_{\e\to 0} \varphi_\e=\varphi,\ \ \ \ \text{in distribution in}\ \ \ L_w^2(0,t;H_0),\]
where we denote by $L^2_w(0,t;H_0)$ the space $L^2(0,T;H_0)$, endowed with the weak topology.

\section{Assumptions}
 In what follows, we will denote by $H$ the Hilbert space $L^2(\mathcal{O};\R)$, endowed with the scalar product
 \[\langle x,y\rangle_H=\int_\mathcal{O} x(\xi)y(\xi)\,d\xi,\]
 and the corresponding norm $\vert\cdot\vert_H$, 
and by $E$ the Banach space $C (\Oc; \R)$,  endowed with the sup-norm
\[|x|_E=\sup_{\xi \in\,\Oc}|x(\xi)|.\] We recall that here $\mathcal{O}\subset \R^d$ is a bounded open set with smooth boundary and $d\geq 1$.

\subsection{The coefficients}
\begin{Hypothesis}\label{H1}
\begin{enumerate}
\item[1.] The mapping  $\sigma: E\times E \times \mathbb{R}\to E$,  is Lipschitz continuous, with Lipschitz constant $L_\sigma$, and the mapping $g: E\rightarrow E \times \mathbb{R}$ is  Lipschitz continuous, with Lipschitz constant $L_{g}$.
\item[2.] The function $F$ is of the form
\[F(x)(\xi)=f(x(\xi)),\ \ \ \ \ \xi \in\,\bar{\mathcal{O}},\ \ \ x \in\,E,\] where $f: \mathbb{R} \rightarrow \mathbb{R}$ is a continuous function which verifies the following  condition
	\begin{equation}
	\label{ipotisi su f}
    \begin{split}
      & f(u_2)-f(u_1) \leq k\, (u_2-u_1),\ \ \ \ \ \ \ \  u_1\leq u_2, \\[10pt]
    \end{split}	
	\end{equation}
	for some constant $k\in \mathbb{R}$.

\end{enumerate}
\end{Hypothesis}

\begin{Remark}\label{rem4.1}
{\em  \begin{enumerate} \item[1.] If for every $x \in\,E$ we define 
\[g(x)(\xi)=(\mathfrak{g}_1(\xi,x(\xi)),\mathfrak{g}_2(x)),\ \ \ \ \ \ \ \ \ \ \xi \in\,\bar{\mathcal{O}},\]
for some continuous functions $\mathfrak{g}_1:\Oc\times \mathbb{R}\to\mathbb{R}$ which is Lipschitz continuous in the second variable, uniformly with respect to the first one, and $\mathfrak{g}_2: {E}\to \mathbb{R}$ which is Lipschitz, then it is immediate to check that  $g$  satisfies  condition 1. in Hypothesis \ref{H1}, as it maps $E$ into itself and is Lipschitz continuous. 
\item[2.] If we define
\[\sigma(x,y,r)(\xi):=\mathfrak{s}(\xi,x(\xi),y(\xi),r),\ \ \ \ \ \xi \in\,\bar{\mathcal{O}},\ \ \ x, y \in\,E,\]
for some continuous function $\mathfrak{s}:\Oc\times \mathbb{R}^3\to\mathbb{R}$, which is Lipschitz-continuous with respect to the last three variables, uniformly with respect to the first one, then $\sigma$ satisfies condition 1. in Hypothesis \ref{H1}.
\item[.] Given the mapping $\sigma:E\times E\times \mathbb{R}\to E$ we can define a new mapping, which with some abuse of notation we will still denote by $\sigma$, by setting
\[[\sigma(x,y,r)z](\xi)=\sigma(x,y,r)(\xi)\,z(\xi)\ \ \ \ \ \ \xi \in\,\Oc,\ \ x, y, z \in\,E,\ \ r\in\, \mathbb{R}.\]
Notice that for every  $x_1, y_1, x_2, y_2, z \in\,E$, $r_1, r_2 \in\, \mathbb{R}$
\begin{equation}
\label{bms2}
\vert \sigma(x_1,y_1,r_1)z	-\sigma(x_2,y_2,r_2)z\vert_E\leq L_\sigma\,\left(\vert x_1-x_2\vert_E+\vert y_1-y_2\vert_E+|r_2-r_1|\right)\,\vert z\vert_E.
\end{equation}

\end{enumerate}
}	
\end{Remark}

\subsection{The  operator $A$}

For all $x \in C^2(\Oc)$ we define 
\[
\mathcal{A}x(\xi) := \frac{1}{2}\sum_{i,j=1}^d \frac{\partial}{\partial \xi_i}\left(a_{ij}\, \frac{\partial x}{\partial \xi_j}\right)(\xi) \ \ \ \ \ \ \  \xi \in \bar{\mathcal{O}},
\]
and we assume the following conditions.
\begin{Hypothesis}
	\label{H2} The coefficients $a_{i j}:\Oc\to\mathbb{R}$ are of class $\mathcal{C}^1(\Oc)$. Moreover, the matrix $[a_{ij}(\xi)]_{j,j}$ is  symmetric, for every $\xi \in \Oc$, and strictly positive, uniformly with respect to $\xi \in\,\Oc$. 
\end{Hypothesis}

In what follows, we denote by $A$ the realization in $H$ of  $\mathcal{A}$, endowed with the Dirichlet boundary conditions. Namely,
\[
D(A) = \{ x \in W^{2,2}(\mathcal{O}; \mathbb{R}^r) : x = 0 \text{ in } \partial \mathcal{O} \}, \quad Ax = \mathcal{A}x,\ \ \ \ \ \ x \in\,D(A).
\]
We know that $A $ generates an analytic  and strongly continuous semigroup  $S(s)$, $s\geq 0$, on $H$. Moreover there exists a complete orthonormal system $(e_k)_{k \in\,\mathbb{N}}$ of $H$ and a strictly positive sequence $(\alpha_k)_{k \in\,\mathbb{N}}$ such that 
\[Ae_k=-\alpha_k e_k,\ \ \ \ \ k \in\,\mathbb{N}.\]

\begin{Remark}
	{\em 
	\begin{enumerate}
\item[1.] The semigroup $S(s)$ acts  on $E$ and
\[|S(s)x|_{E} \leq |x|_E.\]
 \item[2.] For every $p\geq 1$ and $\eta\geq 0$, the semigroup $S(s)$ maps $L^p(\mathcal{O})$ into $W^{\eta,p}(\mathcal{O})$ with \begin{equation}\label{stima-reg-semigruppo}
    |S(s)x|_{W^{\eta,p}(\mathcal{O})} \leq c\, (s\wedge 1)^{-\eta/2} |x|_{L^{p}(\mathcal{O})},\ \ \ \ \ \ \ s>0,\ \ \ \ \ \ x \in\,L^p(\mathcal{O}),
    \end{equation}
and the constant  $c$ does not depend on $p$.
\item[3.] There exists a continuous kernel $K: (0,+\infty) \times \R^d \times \R^d \to [0,+\infty)$ such that for all $x\in E$ and $s\geq 0$
\[ S(s)x(\xi)=\int_{\mathcal{O}}K(s,\xi,y)x(y) dy.\]
It is well know that the kernel $K$ satisfies the following bounds
\begin{equation}\label{bms3}0\leq K(s,\xi,y)\leq k_1 s^{-d/2},\ \ \ \ \ \ \ \ \int_{\mathcal{O}}  K(s,\xi,y)dy\leq 1,\end{equation}
and 
\begin{equation}
\label{bms7}	
|\nabla_{\xi} K(s,\xi,y)|\leq k_2 s^{-(d+1)/2},\ \ \ \ \ \ |\partial_s K(s,\xi,y)| \leq k_3 s^{-(d+2)/2}.
\end{equation}

     \end{enumerate}
 }
\end{Remark}
    
    \subsection{The noise}\label{hypNoise}
     
  The Gaussian noise $W_s$, $s \geq 0$,  is given by 
\begin{equation}
\label{bms1}	
W_s=\sum_{i=1}^{\infty} \sqrt{\la_i}\, e_i\,\beta_i(s),\ \ \ \ \ \  s \geq 0,
\end{equation}
 where  $(\lambda_i)_{i \in\,\mathbb{N}}$ is a sequence of non-negative real numbers, $(e_i)_{i \in\,\mathbb{N}}$ is the orthonormal system that diagonalizes $A$  and $(\beta_i)_{i\in \mathbb{N}}$ is a sequence of independent standard (real valued) Brownian motions, all defined on the stochastic basis $(\Omega, \mathcal{F}, (\mathcal{F}_{s})_{s\geq 0}, \mathbb{P})$.  We assume the following conditions.
\begin{Hypothesis}
\label{H3}
If $d=1$, then
\[ \sup_{i\in \mathbb{N}}\la_i<\infty.\]
If $d\geq 2$, then there exists $\theta<d/(d-2)$ such that
\[\sum_{i=1}^\infty \la_i^{\theta}\, |e_i|^2_{L^\infty(\mathcal{O})}<\infty.\]
\end{Hypothesis}
In what follows, we shall denote by $H_0$ the reproducing kernel of the noise $W_s$. $H_0$ is a Hilbert space, endowed with the norm
\[\vert x\vert_{H_0}^2=\sum_{i=1}^\infty \lambda_i\vert\langle x,e_i\rangle_H\vert^2.\]
Finally, we  fix $T>0$ and, for every $p\geq 1$ and $0\leq \lambda < t\leq T$, we  denote by $\mathcal{H}^{\,p}_{\lambda,t}(E)$    the space of adapted $E$-valued processes $X(s)$, $s \in\,[\lambda,t]$, such that
\[\vert X\vert_{ \mathcal{H}^{\,p}_{\lambda,t}(E)}^p:=\mathbb{E}\sup_{s \in\,[\lambda,t]}\vert X(s)\vert_E^p<\infty.\]

\section{Conditional Expectation in separable Banach  spaces}
\label{sec3}
Let $K$ be a separable Banach space.  Given a probability space $(\Omega,\mathcal{F},\mathbb{P})$ and $p\geq 1$, we define
$$L^p(\Omega,\mathcal{F},\mathbb{P};K):=\left\{Z: \Omega \rightarrow K \hbox{ Bockner-measurable such that }\mathbb{E}|X|_K^p<\infty \right\}.$$
Moreover  we denote by $\mathbb{S}$ the vector spaces of simple functions 
\[Z=\sum_{i=1}^n z_i\, I_{A_i},\]
for arbitrary $ n\in \N$, $ z_i\in K$ and $A_i\in \mathcal{F}$.
As well known,  the set  $\S$ is dense in $L^p(\Omega,\mathcal{F},\mathbb{P};K)$.

Now, given another $\sigma$-algebra 
$\mathcal{G}\subseteq \mathcal{F}$ and a simple random variable $Z=\sum_{i=1}^n z_i\, I_{A_i} \in \S $ we  define 
$$ \E(Z|\mathcal{G}):= \sum_{i=1}^nz_i\, \E(I_{A_i}|\mathcal{G}).$$
Notice that this definition does not depend on the particular representation of $Z$.
\begin{Lemma} \label{lemma2.1} The linear functional \begin{equation} \label{bms24}Z\in\,\S\mapsto \E(Z|\mathcal{G}) \in\,L^p(\Omega,\mathcal{G},\mathbb{P};K),\end{equation}
is well defined. 
Moreover 
\begin{equation}\label{bms25}\E\,|\E(Z|\mathcal{G})|_K^p\leq \E\,|Z|_K^p,\ \ \ \ \ p\geq 1.\end{equation}
\end{Lemma}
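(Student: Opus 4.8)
The plan is to reduce everything to the scalar theory of conditional expectation, using the separability of $K$ to represent the norm $|\cdot|_K$ through a countable family of functionals. For the well-definedness, I would first observe that for each $A_i\in\mathcal{F}$ the scalar conditional expectation $\mathbb{E}(I_{A_i}|\mathcal{G})$ is a well-defined element of $L^\infty(\Omega,\mathcal{G},\mathbb{P})$ taking values in $[0,1]$ almost surely. Hence $\mathbb{E}(Z|\mathcal{G})=\sum_{i=1}^n z_i\,\mathbb{E}(I_{A_i}|\mathcal{G})$ is strongly $\mathcal{G}$-measurable, takes values in the finite-dimensional subspace spanned by $z_1,\dots,z_n$, and is essentially bounded in $K$, so that it belongs to $L^p(\Omega,\mathcal{G},\mathbb{P};K)$ for every $p\geq 1$. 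Independence of the chosen representation, already remarked above, is immediate from the linearity of the scalar conditional expectation.

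The key tool is the following consequence of separability: there exists a sequence $(f_n)_n$ in the closed unit ball $B_{K^*}$ of the dual such that
\[
|x|_K=\sup_n |f_n(x)|,\qquad x\in K.
\]
This is obtained by fixing a dense sequence $(x_k)_k$ in $K$, choosing via Hahn--Banach norming functionals $f_k\in B_{K^*}$ with $f_k(x_k)=|x_k|_K$, and using density to verify that $\sup_k|f_k(x)|=|x|_K$ for every $x\in K$.

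Finally I would prove the contraction \eqref{bms25}. Fixing $n$ and using the linearity of $f_n$ together with the linearity of the scalar conditional expectation,
\[
f_n\big(\mathbb{E}(Z|\mathcal{G})\big)=\sum_{i=1}^n f_n(z_i)\,\mathbb{E}(I_{A_i}|\mathcal{G})=\mathbb{E}\big(f_n(Z)\,\big|\,\mathcal{G}\big)\quad\text{a.s.}
\]
Since $|f_n(Z)|\leq|Z|_K$, the scalar conditional Jensen inequality gives
\[
\big|f_n(\mathbb{E}(Z|\mathcal{G}))\big|=\big|\mathbb{E}(f_n(Z)|\mathcal{G})\big|\leq \mathbb{E}\big(|Z|_K\,\big|\,\mathcal{G}\big)\quad\text{a.s.}
\]
Because the family $(f_n)_n$ is countable, the corresponding exceptional null sets can be merged into a single one, and taking the supremum over $n$ yields the pointwise domination
\[
\big|\mathbb{E}(Z|\mathcal{G})\big|_K=\sup_n\big|f_n(\mathbb{E}(Z|\mathcal{G}))\big|\leq \mathbb{E}\big(|Z|_K\,\big|\,\mathcal{G}\big)\quad\text{a.s.}
\]
Raising to the power $p\geq 1$, taking expectations, and applying conditional Jensen for $t\mapsto t^p$ together with the tower property gives
\[
\mathbb{E}\big|\mathbb{E}(Z|\mathcal{G})\big|_K^p\leq \mathbb{E}\Big[\mathbb{E}\big(|Z|_K\,\big|\,\mathcal{G}\big)^p\Big]\leq \mathbb{E}\,|Z|_K^p,
\]
which is the desired estimate. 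The only genuinely infinite-dimensional ingredient, and the step I expect to be the crux, is the existence of the countable norming family, which is exactly where the separability of $K$ enters; the sole remaining technical care is the merging of countably many null sets when passing to the supremum, all other manipulations being direct transcriptions of the scalar properties.
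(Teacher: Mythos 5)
Your proof is correct, but it follows a genuinely different route from the paper's. The paper first reduces to a representation $Z=\sum_{i=1}^n z_i I_{A_i}$ with \emph{disjoint} $A_i$, applies the triangle inequality to get $|\E(Z|\mathcal G)|_K\le\sum_i|z_i|_K\,\E(I_{A_i}|\mathcal G)=\E\bigl(\sum_i|z_i|_K I_{A_i}\,\big|\,\mathcal G\bigr)=\E(|Z|_K\,|\,\mathcal G)$, and then concludes with conditional Jensen and the identity $\bigl(\sum_i|z_i|_K I_{A_i}\bigr)^p=\sum_i|z_i|_K^p I_{A_i}$ on disjoint sets; no duality is used and separability of $K$ plays no role in the estimate. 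You instead establish the pointwise contraction $|\E(Z|\mathcal G)|_K\le\E(|Z|_K\,|\,\mathcal G)$ a.s.\ via a countable norming family $(f_n)\subset B_{K^*}$, commuting $f_n$ with the conditional expectation and merging the countably many null sets. Both arguments are complete; the paper's is more elementary (only the triangle inequality plus disjointness), while yours proves the stronger almost-sure domination and, unlike the paper's, extends verbatim from simple functions to general $Z\in L^p(\Omega,\mathcal F,\P;K)$ once the operator has been extended by density --- at the price of invoking Hahn--Banach and of a minor notational clash (you use $n$ both for the number of atoms of $Z$ and for the index of the norming functionals, which should be separated).
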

\begin{proof} The fact that the  linear mapping \eqref{bms24} is well-defined is straightforward. In order to prove  estimate \eqref{bms25}, we first  notice that we can always assume that $Z=\sum_{i=1}^n z_i I_{A_i}  $  for some $z_i\in K$ and $A_i\in \mathcal{F}$, with   $A_i\cap A_j=\emptyset $, if $i\neq j$. Thus, we have
$$\begin{aligned}
\E\,|\E(Z|\mathcal{G})|_K^p  =\E\,\big|\sum_{i=1}^n z_i\, \E(I_{A_i}|\mathcal{G})\big|_K^p &\leq \E\,\left (\sum_{i=1}^n |z_i|_K \E(I_{A_i}|\mathcal{G})\right)^p \\[10pt] = \E\,\left(\E\left(\sum_{i=1}^n |z_i|_KI_{A_i}\big|\mathcal{G}\right)\right)^p  & \leq
\E\left(\E\left(\sum_{i=1}^n |z_i|_K^p I_{A_i}\big|\mathcal{G}\right)\right)=\sum_{i=1}^n |z_i|_K^p\,\P(A_i)=\E\,|Z|_K^p.    
\end{aligned}
$$
and the claim follows.\end{proof}

\medskip
As a consequence of Lemma \ref{lemma2.1},   the linear mapping \eqref{bms24} can be extended to a continuous linear operator, with operatorial norm 1,  from $L^p(\Omega,\mathcal{F},\mathbb{P};K)$ into $L^p(\Omega,\mathcal{G},\mathbb{P};K)$. We will still denote such operator by $\E(Z|\mathcal{G})$.

\smallskip

The next result shows how the conditional expectation we just defined in Lemma \ref{lemma2.1} is in fact a {\em local} operator, in case $K$ is the space $E$ of continuous functions.

\begin{Proposition}  \label{prop-cond-exp-in-C}
If $Z\in  L^p(\Omega, \mathcal{F},\mathbb{P}; E)$ then for every $\xi \in\,\Oc$
\[\mathbb{E}(Z\big | \mathcal{G})(\xi)=\mathbb{E}(Z(\xi)\big | \mathcal{G}), \quad \mathbb{P}-a.s.\]
\end{Proposition}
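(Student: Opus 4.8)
The plan is to reduce the statement to the behaviour of the conditional expectation on simple functions, already recorded in Lemma~\ref{lemma2.1}, and then to pass to the limit by density. The crucial observation is that for each fixed $\xi\in\Oc$ the evaluation map $\delta_\xi:E\to\mathbb{R}$, defined by $\delta_\xi(x):=x(\xi)$, is a bounded linear functional with $\|\delta_\xi\|\le 1$, since $|x(\xi)|\le|x|_E$. In particular $Z(\xi)=\delta_\xi(Z)$ is measurable and belongs to $L^p(\Omega,\mathcal{F},\mathbb{P};\mathbb{R})$ whenever $Z\in L^p(\Omega,\mathcal{F},\mathbb{P};E)$, and the identity to be proved reads $\delta_\xi\,\mathbb{E}(Z|\mathcal{G})=\mathbb{E}(\delta_\xi Z|\mathcal{G})$. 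Thus the whole assertion is an instance of the commutation of the Banach-valued conditional expectation with bounded linear operators, which I will verify directly within the framework of Section~\ref{sec3}.

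First I would check the commutation on simple functions. If $Z=\sum_{i=1}^n z_i I_{A_i}\in\S$, then by the very definition of $\mathbb{E}(\cdot|\mathcal{G})$ on $\S$ and the linearity of $\delta_\xi$,
\[
\delta_\xi\,\mathbb{E}(Z|\mathcal{G})=\sum_{i=1}^n z_i(\xi)\,\mathbb{E}(I_{A_i}|\mathcal{G})=\mathbb{E}\Big(\sum_{i=1}^n z_i(\xi)\,I_{A_i}\,\Big|\,\mathcal{G}\Big)=\mathbb{E}(\delta_\xi Z|\mathcal{G}),\quad \mathbb{P}\text{-a.s.},
\]
because $\sum_i z_i(\xi)I_{A_i}=\delta_\xi Z$ is a real-valued simple function. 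This already settles the claim for $Z\in\S$.

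Next I would pass to a general $Z\in L^p(\Omega,\mathcal{F},\mathbb{P};E)$ by density of $\S$. Choosing simple functions $Z_n\to Z$ in $L^p(\Omega,\mathcal{F},\mathbb{P};E)$, the contraction estimate \eqref{bms25} applied to $Z_n-Z$ gives $\mathbb{E}(Z_n|\mathcal{G})\to\mathbb{E}(Z|\mathcal{G})$ in $L^p(\Omega,\mathcal{G},\mathbb{P};E)$; since $|\delta_\xi x|\le|x|_E$, evaluation at $\xi$ yields $\delta_\xi\mathbb{E}(Z_n|\mathcal{G})\to\delta_\xi\mathbb{E}(Z|\mathcal{G})$ in $L^p(\Omega,\mathcal{G},\mathbb{P};\mathbb{R})$. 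On the other hand, $|\delta_\xi(Z_n-Z)|\le|Z_n-Z|_E$ gives $\delta_\xi Z_n\to\delta_\xi Z$ in $L^p(\Omega,\mathcal{F},\mathbb{P};\mathbb{R})$, whence, by the contraction property of the scalar conditional expectation, $\mathbb{E}(\delta_\xi Z_n|\mathcal{G})\to\mathbb{E}(\delta_\xi Z|\mathcal{G})$ in $L^p(\Omega,\mathcal{G},\mathbb{P};\mathbb{R})$. Since the two sides agree for every $n$ by the simple-function case, their $L^p$-limits coincide, so $\delta_\xi\mathbb{E}(Z|\mathcal{G})=\mathbb{E}(\delta_\xi Z|\mathcal{G})$ $\mathbb{P}$-a.s., which is precisely the asserted identity.

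The argument is essentially routine, and I do not expect a genuine obstacle; the only point requiring care is that the operator $\mathbb{E}(\cdot|\mathcal{G})$ is defined on $L^p(\Omega,\mathcal{F},\mathbb{P};E)$ only as an $L^p$-limit, so the pointwise equality at a fixed $\xi$ must be obtained by matching the two $L^p$-limits above (equivalently, by passing to an almost surely convergent subsequence) rather than by a pathwise manipulation of $\mathbb{E}(Z|\mathcal{G})$. One should also keep in mind that the exceptional null set is permitted to depend on $\xi$, in accordance with the pointwise-in-$\xi$ formulation of the statement.
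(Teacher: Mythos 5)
Your argument is correct and follows essentially the same route as the paper: verify the identity on simple functions, then use the density of $\mathbb{S}$ together with the contraction property \eqref{bms25} and the continuity of evaluation at $\xi$ to match the two $L^p$-limits. The remark that the null set may depend on $\xi$ is a sensible clarification, but the substance of the proof is identical to the one in the paper.
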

\begin{proof} If $Z=\sum_{i=1}^n z_i I_{A_i}$,  for some   $n\in \N$, $z_i\in E$, and $A_i\in \mathcal{F}$,  then the claim is straightforward. 
Now, let us fix an arbitrary $Z\in  L^p(\Omega,\mathcal{F},\mathbb{P};E)$ and let $(Z_n)_{n\in \N}$ be a sequence of simple functions such that \[\lim_{n\to\infty} \mathbb{E}|Z_n-Z|_E^p=0.\] Due to the continuity of conditional expectation proved in Lemma \ref{lemma2.1}, we have 
\[\lim_{n\to\infty}\E\,|\E(Z_n|\mathcal{G})-\E(Z|\mathcal{G})|^p_E=0,\] so that, for any fixed $\xi\in \Oc$, we obtain 
\begin{equation}\label{bms35}\lim_{n\to\infty}\E\,|\E(Z_n(\xi)|\mathcal{G})-\E(Z|\mathcal{G})(\xi)|^p=\lim_{n\to\infty}\E\,|(\E(Z_n|\mathcal{G})-\E(Z|\mathcal{G}))(\xi)|^p=0.\end{equation} 
On the other hand, since   
\[\lim_{n\to\infty}E|Z_n(\xi)-Z(\xi)|^p=0,\]
for every fixed $\xi\in\,\Oc$, from the continuity property of standard conditional expectation it follows  
\[\lim_{n\to\infty}\E\,|\E(Z_n(\xi)|\mathcal{G})-\E(Z(\xi)|\mathcal{G})|^p=0.\] 
Thanks to \eqref{bms35}, this allows to conclude that $\E(Z(\xi)|\mathcal{G})= \E(Z|\mathcal{G})(\xi)$.\end{proof}

\begin{Remark}\em It is immediate to check that if $Z=(Z_1,Z_2)\in  L^p(\Omega, \mathcal{F},\mathbb{P}; E\times \mathbb{R})$ then 
\[\mathbb{E}(Z\big | \mathcal{G})=(\mathbb{E}(Z_1\big | \mathcal{G}),\mathbb{E}(Z_2\big | \mathcal{G})).\]
\end{Remark}

\section{Main Results}\label{main}
We fix  $0\leq \lambda<t<T$ and  $x \in\,E$.  With the notations we have introduced in the previous two sections,  for every $\e >0$ equation \eqref{SPDE-intro} can be written as
\begin{equation}
\label{stoch-pde.existence}
\left\{\begin{array}{l}
\ds{dX(s)=\left(A	X(s)+F(X(s))\right)\,ds+\sqrt{\e}\,\sigma(X(s), \E(g(X(t))|\Fs)\,dW_s,}\\[10pt]
\ds{X(\lambda)=x,}
	\end{array}\right.
\end{equation}
for every  $\lambda \leq s \leq t$. 
\begin{Definition}
{\em A process $X \in\,\mathcal{H}^{\,p}_{\lambda,t}(E)$ is a mild solution for equation \eqref{stoch-pde.existence} if 
\begin{equation}\label{mildmain}
X(s)=S(s-\lambda)x+\int_\lambda^s S(s-r)F(X(r))dr
+\sqrt{\e}\int_\lambda^s S(s-r) \sigma(X(r),  \E(g(X(t))|\F_r))\,dW_r,
\end{equation}
holds for every $\lambda\leq s\leq t$.
}
\end{Definition}

Our first result concerns with the study of the well-posedness of equation \eqref{stoch-pde.existence}. 

\begin{Theorem}\label{main-ex-un} 
Under Hypotheses \ref{H1}, \ref{H2} and \ref{H3}, for all $T>0$ and $p\geq 1$ we can find $\e_{T,p}>0$ such that for all  $x\in E$  and $0\leq \lambda<t \leq T$ and all $\e\leq \e_{T,p}$ 
there exists a unique mild solution   $X_{\e,\lambda}^{t,x} \in \mathcal{H}^{\,p}_{\lambda,t}(E)$ for equation \eqref{stoch-pde.existence}.
Moreover, for every $x_1, x_2 \in\,E$, we have
\begin{equation}
\label{bms6}
\mathbb{E}\sup_{s \in\,[\lambda,t]}\left|X^{t,x_1}_{\epsilon,\lambda}(s)-X^{t,x_2}_{\epsilon,\lambda}(s)\right|_E^p\leq c_{T,p}\,|x_1-x_2|_E^p,\ \ \ \ \ \ \epsilon\leq \epsilon_{T,p}.	
\end{equation}

\end{Theorem}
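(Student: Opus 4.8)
The plan is to establish existence and uniqueness via a contraction argument in $\mathcal{H}^{\,p}_{\lambda,t}(E)$, exploiting the decomposition into the Salins map $\mathcal{M}_t$ and the stochastic convolution $\gamma_t$ described in the introduction. First I would observe that $X \in \mathcal{H}^{\,p}_{\lambda,t}(E)$ is a mild solution of \eqref{stoch-pde.existence} if and only if it is a fixed point of
\[
\Phi(X) := \mathcal{M}_t\bigl(\sqrt{\e}\,\gamma_t(X) + S(\cdot-\lambda)x\bigr),
\]
where $\gamma_t(X)(s) = \int_\lambda^s S(s-r)\,\sigma\bigl(X(r),\E(g(X(t))|\F_r)\bigr)\,dW_r$. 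Since $\mathcal{M}_t$ is Lipschitz continuous on $C([\lambda,t];E)$ with a constant $L_{\mathcal{M}}$ depending only on $f$ (through $k$), $T$, and the semigroup, and since this Lipschitz bound holds pathwise, it lifts to a Lipschitz bound on $L^p(\Omega;C([\lambda,t];E)) = \mathcal{H}^{\,p}_{\lambda,t}(E)$ with the same constant. Thus the whole contraction estimate reduces to showing that $\gamma_t$ is Lipschitz continuous on $\mathcal{H}^{\,p}_{\lambda,t}(E)$, with a constant $L_\gamma$ independent of $\e$; then $\Phi$ has Lipschitz constant $L_{\mathcal{M}}\sqrt{\e}\,L_\gamma$, which is strictly less than $1$ for $\e \leq \e_{T,p}$ small enough.

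The core estimate is therefore the Lipschitz continuity of $\gamma_t$. For two processes $X_1, X_2 \in \mathcal{H}^{\,p}_{\lambda,t}(E)$, using \eqref{bms2} from Remark \ref{rem4.1} together with the Lipschitz continuity of $g$ and the contraction property \eqref{bms25} of conditional expectation in $E$, I would bound the increment of the integrand by
\[
L_\sigma\bigl(|X_1(r)-X_2(r)|_E + |\E(g(X_1(t))|\F_r)-\E(g(X_2(t))|\F_r)|\bigr),
\]
controlling the conditional-expectation term by $L_g\,\E(\sup_{s}|X_1(s)-X_2(s)|_E \mid \F_r)$. To pass from this integrand estimate to a supremum bound over $s \in [\lambda,t]$ in $L^p(\Omega)$, I would invoke the factorization method: write $\gamma_t$ through a fractional-integral representation with an intermediate exponent $\alpha \in (0,1)$, apply the Burkholder--Davis--Gundy inequality together with the regularizing estimate \eqref{stima-reg-semigruppo} of the semigroup from $L^{2p}$ into $W^{\eta,2p}(\mathcal{O}) \hookrightarrow E$ for $\eta > d/(2p)$, and use Hypothesis \ref{H3} on the noise to ensure the relevant time integrals converge. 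This yields $|\gamma_t(X_1)-\gamma_t(X_2)|_{\mathcal{H}^{\,p}_{\lambda,t}(E)} \leq L_\gamma\,|X_1-X_2|_{\mathcal{H}^{\,p}_{\lambda,t}(E)}$ with $L_\gamma$ depending on $T,p$ but not on $\e$.

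The most delicate point is handling the temporal non-locality of the diffusion coefficient, namely the dependence on $\E(g(X(t))|\F_r)$ evaluated at the terminal time $t$. This is precisely why a standard Gronwall-type iteration fails and why the contraction must be carried out globally on the whole interval $[\lambda,t]$ rather than on a short subinterval; the supremum norm on $\mathcal{H}^{\,p}_{\lambda,t}(E)$ absorbs the terminal-time coupling cleanly because $\sup_{r}\E(\cdot|\F_r)$ is controlled in $L^p$ by Doob's inequality applied to the conditional expectations, or more directly by the contraction \eqref{bms25}. I expect this to be where the restriction $\e \leq \e_{T,p}$ genuinely enters, since the coupling strength scales with $\sqrt{\e}$ and $L_\gamma$ may grow with $T$. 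Once $\Phi$ is a contraction, Banach's fixed point theorem gives existence and uniqueness of $X^{t,x}_{\e,\lambda}$. Finally, the Lipschitz dependence \eqref{bms6} on the initial datum follows by applying the same $\mathcal{M}_t$--$\gamma_t$ estimates to $\Phi$ with two initial data $x_1, x_2$: the difference of the fixed points satisfies
\[
|X^{t,x_1}_{\e,\lambda}-X^{t,x_2}_{\e,\lambda}|_{\mathcal{H}^{\,p}_{\lambda,t}(E)} \leq \frac{L_{\mathcal{M}}}{1-L_{\mathcal{M}}\sqrt{\e}\,L_\gamma}\,|S(\cdot-\lambda)(x_1-x_2)|_{C([\lambda,t];E)} \leq c_{T,p}\,|x_1-x_2|_E,
\]
using $|S(\cdot)(x_1-x_2)|_E \leq |x_1-x_2|_E$, which gives \eqref{bms6} after raising to the $p$-th power.
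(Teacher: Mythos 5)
Your proposal is correct and follows essentially the same route as the paper: the mild formulation is recast as a fixed point of $X\mapsto \mathcal{M}_{\lambda,t}(\sqrt{\e}\,\gamma_{\lambda,t}(X)+S(\cdot-\lambda)x)$, the Lipschitz continuity of the Salins map $\mathcal{M}_{\lambda,t}$ is combined with a factorization/Burkholder--Davis--Gundy estimate showing $\gamma_{\lambda,t}$ is Lipschitz on $\mathcal{H}^{\,p}_{\lambda,t}(E)$ (handling the non-local term via the operator-norm-one property of the $E$-valued conditional expectation), and smallness of $\e$ yields the contraction and the estimate \eqref{bms6}. This is exactly the structure of the paper's proof, with the $\gamma$-estimate isolated there as Proposition \ref{prop stoch conv}.
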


Once proved the well-posedness of equation \eqref{stoch-pde.existence}, we  study the validity of a large deviation principle for the family $\{\mathcal{L}(X_{\e}^{t,x})\}_{\epsilon \in\,(0,\e_{T})}$  in the space $C([0,t];E)$, for some $\e_T>0$, where  \[X_{\e}^{t,x}:=X_{\e,0}^{t,x}.\]

To this purpose, for every $\varphi \in\,L^2(0,t;H_0)$ and $x \in\,E$, we consider the  deterministic controlled problem 
\begin{equation}
\label{sbm2-bis-main}
\frac {dX}{ds}(s)=A	X(s)+F(X(s))+\sigma(X(s),  v_{t}(s,X(s)))\varphi(s),\ \ \ \ X(0)=x,	\end{equation}
where
\[ v_{t}(\rho,y) =  g(Y_\rho^{y}(t)),\ \ \ \ \ \ \rho \in\,[0,t],\ \ \ \ y \in\,E,\]
 and $Y^{y}_\rho$ is the solution of the equation
\[
Y(s)=\int_\rho^s S(s-r)F(Y(r))dr
+ S(s-\rho)y, \ \ \ \ \ s\in\,[\rho,t].
\]
We will see that equation \eqref{sbm2-bis-main} admits a unique mild solution  $X^{t,x,\varphi} \in\,C([0,t];E)$. In particular, we can state the following result.

\begin{Theorem}
\label{LDP} Under Hypotheses \ref{H1}, \ref{H2} and \ref{H3}, for every $T>0$ there exists some $\e_T>0$ such that, for every  $0<t\leq T$ and $x \in\, E$, the family $\{\mathcal{L}(X_{\e}^{t,x})\}_{\epsilon \in\,(0,\e_{T})}$ satisfies a large deviation principle in $C([0,t];E)$, with speed $\e$ and  action functional 	
\[I_{t,x}(X)=\frac 12\,\inf\left\{\int_0^t\vert \varphi(s)\vert_{H_0}^2\,ds\,:\, X(s)=	X^{t,x,\varphi}(s),\ s \in\,[0,t]\right\},\]
where $X^{t,x,\varphi}$ is the unique mild solution of equation \eqref{sbm2-bis-main}.
\end{Theorem}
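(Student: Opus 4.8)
The plan is to establish the large deviation principle through the weak convergence (Laplace principle) approach of Budhiraja, Dupuis and Maroulas \cite{bdm}, following the scheme already sketched in the Introduction. Writing $X_\e^{t,x}=\mathcal{G}^\e(\sqrt\e\,W)$ and $X_\e^{t,x,\varphi}=\mathcal{G}^\e(\sqrt\e\,W+\int_0^\cdot\varphi(s)\,ds)$, and denoting by $\mathcal{G}^0$ the map sending $\int_0^\cdot\varphi\,ds$ to the solution $X^{t,x,\varphi}$ of the skeleton equation \eqref{sbm2-bis-main}, it suffices to verify the two standard sufficient conditions. \textbf{Condition 1:} for every $M>0$ the map $\varphi\mapsto X^{t,x,\varphi}$ is continuous from the ball $\{\varphi:\vert\varphi\vert_{L^2(0,t;H_0)}\leq M\}$, equipped with the weak topology, into $C([0,t];E)$; since this ball is compact and metrizable for the weak topology of the separable Hilbert space $L^2(0,t;H_0)$, this yields both compactness of the level sets of $I_{t,x}$ and the Laplace lower bound. \textbf{Condition 2:} for every $M>0$ and every family $\{\varphi_\e\}\subset\Lambda_{t,M}$ with $\varphi_\e\to\varphi$ in distribution in $L^2_w(0,t;H_0)$, one has $X_\e^{t,x,\varphi_\e}\to X^{t,x,\varphi}$ in distribution in $C([0,t];E)$, giving the Laplace upper bound. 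Granting these two conditions, the Laplace principle holds and is equivalent to the claimed large deviation principle.

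A preliminary step is the well-posedness of the skeleton equation \eqref{sbm2-bis-main} and of the controlled stochastic equation \eqref{stoch-pde.eps.LDP.1-intro}, for $\e\leq\e_T$ small. The crucial observation is that both become \emph{local} in time: the value functions $v_t$ and $v_{t,\e}$ of \eqref{bms22-bis-intro} are pre-computed, known mappings that, by estimate \eqref{bms6} and the Lipschitz continuity of $g$, are Lipschitz in the state variable uniformly in the starting time and in $\e$. Both equations can then be solved by the same device used for Theorem \ref{main-ex-un}: since the map $\mathcal{M}_t$ sending the forcing $z\in C([0,t];E)$ to the solution of \eqref{SalinsMild-intro} is globally Lipschitz (Salins \cite{Salins}), a solution of \eqref{sbm2-bis-main} is a fixed point of $z\mapsto\mathcal{M}_t\big(\Gamma(z)+S(\cdot)x\big)$, with $\Gamma(z)(s):=\int_0^s S(s-r)\sigma(z(r),v_t(r,z(r)))\varphi(r)\,dr$; estimate \eqref{stima-reg-semigruppo} together with Hypothesis \ref{H3} controls $\Gamma$, and a fixed-point/iteration argument yields existence, uniqueness and Lipschitz dependence on data. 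The same scheme, with $v_t$ replaced by $v_{t,\e}$ and the additional stochastic convolution $\sqrt\e\,\gamma_t$, handles \eqref{stoch-pde.eps.LDP.1-intro}, the Markov identity $\E^{\e,\varphi}(g(X_\e^{t,x,\varphi}(t))\mid\F_s)=v_{t,\e}(s,X_\e^{t,x,\varphi}(s))$ guaranteeing that it too is local.

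For Condition 1, the decisive point is the compactness of the control-to-convolution operator in the sup-norm setting. Using \eqref{stima-reg-semigruppo} one shows that the map $\varphi\mapsto\int_0^\cdot S(\cdot-r)\sigma(z(r),v_t(r,z(r)))\varphi(r)\,dr$ sends the weakly compact ball $\{\vert\varphi\vert_{L^2(0,t;H_0)}\leq M\}$ into a relatively compact subset of $C([0,t];E)$: the smoothing estimate provides uniform bounds in a fractional Sobolev norm $W^{\eta,p}(\mathcal{O})\hookrightarrow E$ (for $p$ large and $\eta p>d$) together with equicontinuity in time, so Ascoli--Arzel\`a applies and weak convergence $\varphi_n\rightharpoonup\varphi$ forces uniform convergence of the convolutions. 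Inserting this into the fixed-point equation, together with the Lipschitz continuity of $\sigma$, of $v_t$ and of $\mathcal{M}_t$ and a Gronwall argument, gives $X^{t,x,\varphi_n}\to X^{t,x,\varphi}$ in $C([0,t];E)$.

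For Condition 2, I would first prove that $\sqrt\e\,\gamma_t(X_\e^{t,x,\varphi_\e})\to0$ in probability in $C([0,t];E)$, using the factor $\sqrt\e$ and the moment bounds underlying Theorem \ref{main-ex-un}, and then establish the small-noise convergence $v_{t,\e}\to v_t$, uniformly on bounded sets of $(\rho,y)$, which follows from $X^{t,y}_{\e,\rho}\to Y^{y}_\rho$ as $\e\to0$ and the Lipschitz continuity of $g$. Proving tightness of $\{(X_\e^{t,x,\varphi_\e},\varphi_\e)\}$ in $C([0,t];E)\times L^2_w(0,t;H_0)$ — again via \eqref{stima-reg-semigruppo} and Ascoli--Arzel\`a — and passing to a Skorokhod representation, the mild form of \eqref{stoch-pde.eps.LDP.1-intro} converges term by term to that of \eqref{sbm2-bis-main}, and uniqueness identifies the limit as $X^{t,x,\varphi}$. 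I expect the main obstacle to be the interplay of the temporal non-locality with the weak-convergence limit: because the diffusion coefficient at time $s$ is driven by the terminal-time value through the value function $v_{t,\e}$, one cannot localize in time, and must instead obtain the small-noise limit $v_{t,\e}\to v_t$ uniformly and route every estimate through the globally Lipschitz map $\mathcal{M}_t$, since the merely quasi-dissipative, possibly super-linear $f$ precludes any direct compactness or growth control on the reaction term.
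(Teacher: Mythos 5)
Your proposal is correct and follows the same overall strategy as the paper: the weak-convergence scheme of \cite{bdm}, with well-posedness of the controlled equation \eqref{stoch-pde.eps.LDP} and of the skeleton equation \eqref{sbm2-bis} obtained exactly as in the paper by composing Salins' Lipschitz map $\mathcal{M}_{\lambda,t}$ with the relevant convolution operators, and with the same three convergence ingredients for condition C1 (the $\sqrt{\e}$ stochastic term vanishing, the quantitative bound $|v_{t,\e}(r,y)-v_{t}(r,y)|_E\leq c\,\sqrt{\e}\,(1+|y|_E)$, and the passage to the limit in the weakly convergent controls inside the convolution). The one place where you genuinely diverge is how C1 is closed: you propose tightness of the pair $(X^{t,x,\varphi_\e}_{\e},\varphi_\e)$ in $C([0,t];E)\times L^2_w(0,t;H_0)$, a Skorokhod representation of the solutions, and term-by-term identification of the limit, whereas the paper applies Skorohod only to the controls and then exploits the contraction property of $X\mapsto\mathcal{M}_{\tau,\tau+\bar{\kappa}}(\gamma^{\varphi,\e}_{t,\tau,\tau+\bar{\kappa}}(X)+S(\cdot)x)$ on short time intervals to absorb the difference $\bar{X}^{t,x,\bar{\varphi}_\e}_{\e}-\bar{X}^{t,x,\bar{\varphi}}$, reducing everything to the convergence of the convolution evaluated at the \emph{fixed} limit trajectory $\bar{X}^{t,x,\bar{\varphi}}$. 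The paper's route buys two things that your version would have to supply separately: it dispenses with any equicontinuity estimate on the solution family (which would require uniform space-time H\"older bounds through $\mathcal{M}$), and it avoids the identification problem that arises when the stochastic integral is transported to a new probability space by Skorokhod, namely showing that the representative processes still solve the equation with respect to a new Wiener process. Both points are surmountable, but if you pursue the tightness route you should address them explicitly; the contraction reduction is the cleaner path here precisely because the whole construction is already organized around the Lipschitz map $\mathcal{M}_{\lambda,t}$.
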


\section{The stochastic convolution}

In this section, we want to study the equation 
\begin{equation}
\label{bms45}
	dZ(s)=	A Z(s)\,ds+\sigma(Z(s),\mathbb{E}(g(Z(t))|\mathcal{F}_s))\,dW_s,\ \ \ \ Z(\lambda)=0,
\end{equation}
for every fixed $0\leq \lambda<t\leq T$ and $s \in\,[\lambda,t]$.
A process $Z^t \in\,\mathcal{H}^{\,p}_{\lambda,t}(E)$ is a mild solution of \eqref{bms45} if
\begin{equation}\label{sbm2}Z^t(s)=\int_\lambda^s S(s-r) \sigma(Z^t(r),  \E(g(Z^t(t))|\F_r))\,dW_r=:\gamma_{\lambda,t}(Z^t)(s),\ \ \ \ \ \ s \in\,[\lambda,t].\end{equation}

\begin{Proposition}\label{prop stoch conv}
Under Hypotheses \ref{H1}, \ref{H2} and \ref{H3}, there exists $\bar{p}\geq 1$ such that $\gamma_{\lambda, t}$ maps $ \mathcal{H}^{\,p}_{\lambda,t}(E)$ into itself, for every $p\geq \bar{p}$, and is Lipschitz-continuous, with Lipschitz constant $
{L_{\gamma_{\lambda,t}}(p)}$. Moreover
\[\sup_{0\leq \lambda<t\leq T}
{L_{\gamma_{\lambda,t}}(p)}=:
{ L_{\gamma}(T,p)} <\infty.\] 
\end{Proposition}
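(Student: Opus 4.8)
The plan is to control the stochastic convolution in the sup-norm, in both space and time, by the factorization method of Da Prato--Zabczyk. Fixing $Z\in\mathcal{H}^{\,p}_{\lambda,t}(E)$ and writing $\Psi(r):=\sigma(Z(r),\E(g(Z(t))|\F_r))$, viewed as a multiplication operator, I would use the identity $\int_r^s(s-\tau)^{\alpha-1}(\tau-r)^{-\alpha}\,d\tau=\pi/\sin(\pi\alpha)$ for $\alpha\in(0,1)$ to factor
\[
\gamma_{\lambda,t}(Z)(s)=\frac{\sin(\pi\alpha)}{\pi}\int_\lambda^s(s-\tau)^{\alpha-1}S(s-\tau)Y_\alpha(\tau)\,d\tau,\qquad Y_\alpha(\tau):=\int_\lambda^\tau(\tau-r)^{-\alpha}S(\tau-r)\Psi(r)\,dW_r.
\]
The outer integral is then handled deterministically: the smoothing estimate \eqref{stima-reg-semigruppo} together with the Sobolev embedding $W^{\eta,p}(\mathcal{O})\hookrightarrow E$, valid for $\eta p>d$, gives $|S(s-\tau)Y_\alpha(\tau)|_E\le c\,(s-\tau)^{-\eta/2}|Y_\alpha(\tau)|_{L^p}$, and H\"older in $\tau$ yields $\sup_{s}|\gamma_{\lambda,t}(Z)(s)|_E^p\le c_{T,p}\int_\lambda^t|Y_\alpha(\tau)|_{L^p(\mathcal{O})}^p\,d\tau$ provided $\alpha-\eta/2>1/p$, with constant depending on $t-\lambda\le T$ only.

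The heart of the argument, and the step I expect to be the main obstacle, is the estimate of $\E|Y_\alpha(\tau)|_{L^p}^p$ with a time singularity strictly weaker than $u^{-1}$, uniformly in the space variable. Since $\sigma$ acts by multiplication and the noise is diagonal in $(e_i)$, for fixed $\xi\in\Oc$ the quantity $Y_\alpha(\tau)(\xi)$ is a real-valued stochastic integral whose quadratic variation is $\int_\lambda^\tau(\tau-r)^{-2\alpha}\mathcal Q(\tau-r,\xi)\,dr$, where $\mathcal Q(u,\xi):=\sum_i\lambda_i\langle K(u,\xi,\cdot)\Psi(r),e_i\rangle_H^2$. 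Interpolating the two pointwise bounds $|\langle K(u,\xi,\cdot)\Psi(r),e_i\rangle_H|\le|\Psi(r)|_E|e_i|_{L^\infty}$ (from $\int K\le 1$ in \eqref{bms3}) and $\sum_i\langle K(u,\xi,\cdot)\Psi(r),e_i\rangle_H^2=|K(u,\xi,\cdot)\Psi(r)|_H^2\le k_1|\Psi(r)|_E^2\,u^{-d/2}$ (from $K\le k_1u^{-d/2}$), and applying H\"older to the series with exponents $\theta,\theta'$, Hypothesis \ref{H3} gives
\[
\mathcal Q(u,\xi)\le c\,u^{-\gamma}\,|\Psi(r)|_E^2,\qquad \gamma:=\frac{d}{2\theta'}<1,
\]
uniformly in $\xi$, the strict inequality $\gamma<1$ being exactly equivalent to the constraint $\theta<d/(d-2)$ (the case $d=1$ being simpler, with $\gamma=1/2$). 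Burkholder--Davis--Gundy then yields $\E|Y_\alpha(\tau)(\xi)|^p\le c_p\,\E\big(\int_\lambda^\tau(\tau-r)^{-2\alpha-\gamma}|\Psi(r)|_E^2\,dr\big)^{p/2}$; choosing $\alpha<(1-\gamma)/2$ makes the singularity integrable, and since the bound is uniform in $\xi$ and $\mathcal O$ is bounded, integrating over $\xi$ and $\tau$ gives $\int_\lambda^t\E|Y_\alpha(\tau)|_{L^p}^p\,d\tau\le c_{T,p}\,\E\int_\lambda^t|\Psi(r)|_E^p\,dr$.

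The three exponent constraints $\eta p>d$, $\alpha-\eta/2>1/p$ and $\alpha<(1-\gamma)/2$ are simultaneously satisfiable precisely when $p>(2+d)/(1-\gamma)$, which identifies the threshold $\bar p$. Combining the two estimates gives $\E\sup_s|\gamma_{\lambda,t}(Z)(s)|_E^p\le c_{T,p}\,\E\int_\lambda^t|\Psi(r)|_E^p\,dr$; the linear growth of $\sigma$ (from its Lipschitz continuity) and the contraction estimate \eqref{bms25} for the conditional expectation show the right-hand side is finite, so $\gamma_{\lambda,t}$ maps $\mathcal{H}^{\,p}_{\lambda,t}(E)$ into itself. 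For the Lipschitz property I would repeat the computation with $\Psi$ replaced by $\sigma(Z_1(r),\E(g(Z_1(t))|\F_r))-\sigma(Z_2(r),\E(g(Z_2(t))|\F_r))$, estimated via \eqref{bms2} by $L_\sigma\big(|Z_1(r)-Z_2(r)|_E+|\E(g(Z_1(t))-g(Z_2(t))|\F_r)|\big)$; the contraction \eqref{bms25} in $L^p(\Omega;E\times\R)$ and the Lipschitz continuity of $g$ bound the conditional-expectation term by $L_g\,|Z_1-Z_2|_{\mathcal{H}^{\,p}_{\lambda,t}(E)}$, which is exactly where the temporal non-locality is absorbed without any dependence on the pointwise values of the trajectories. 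Since every constant depends on the interval only through $t-\lambda\le T$, the resulting Lipschitz constant $L_{\gamma_{\lambda,t}}(p)$ is bounded uniformly by some $L_\gamma(T,p)<\infty$.
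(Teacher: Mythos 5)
Your proposal is correct and follows essentially the same route as the paper: the factorization method, the smoothing estimate \eqref{stima-reg-semigruppo} plus Sobolev embedding for the outer integral, Burkholder--Davis--Gundy pointwise in $\xi$ combined with the H\"older/interpolation argument on the series (exploiting Hypothesis \ref{H3} and the kernel bounds \eqref{bms3} to produce the integrable singularity $u^{-d/(2\theta')}$), and the operator-norm-one contraction of the conditional expectation together with the Lipschitz continuity of $g$ to absorb the temporal non-locality. The only differences are presentational: you make the exponent constraint $\alpha<(1-\gamma)/2$ and the resulting threshold for $\bar p$ explicit, which the paper leaves implicit in the integrability of $r^{-2\alpha-d/(2\zeta)}$.
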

\begin{proof}  If we fix  $X_1, X_2 \in \mathcal{H}^{\,p}_{\lambda,t}(E)$ and define $\delta\gamma_{\la, t}:=\gamma_{\la, t}(X_2)-\gamma_{\la, t}(X_1)$,    for every  $\alpha\in (0,1/2)$ we have
$$ \delta\gamma_{\la, t}(s) =\dfrac{\sin \pi \alpha}{\pi} \int_\lambda^s (s-r)^{\alpha-1}S(s-r)V^{\alpha}_t(r)dr,\ \ \ \ \ \ \ s \in\,[\lambda,t],$$ 
where
\[\displaystyle V^{\alpha}_t(r):= \int_\lambda^r (r-\rho)^{-\alpha}S(r-\rho)Y_t(\rho)dW_{\rho},\]
 and 
\begin{equation}\label{def-deltaG}
  Y_t(\rho) :=  \sigma(X_2(\rho), \E(g(X_2(t))|\F_{\rho}))-\sigma(X_1(\rho), \E(g(X_1(t))|\F_{\rho})).  
\end{equation}
We notice that, according to Proposition \ref{prop-cond-exp-in-C}, for all $h\in E$ we have 
$$ [Y_t(\rho)h](\xi) = Y_t(\rho)(\xi)\,h(\xi),\ \ \ \ \ \ \xi \in\,\Oc.$$
In view of  \eqref{stima-reg-semigruppo}, we have
$$ | \delta\gamma_{\la, t}(s)|_{W^{\eta,p}(\mathcal{O})}\leq c_{_\alpha }\int_\lambda^s (s-r)^{\alpha-1-\eta/2}|V^{\alpha}_t(r)|_{L^p(\mathcal{O})}dr.$$
We choose  $\bar{p}$ large enough so that $  \frac{1}{\bar{p}}<\alpha<\frac{1}{2}$ and $0<\eta<2\alpha -\frac{2}{\bar{p}}$. Then  $(\alpha-1-\frac{\eta}{2})\frac{p}{p-1}>-1$, for every $p\geq \bar{p}$, and  
\[\begin{array}{l}
\ds{	| \delta\gamma_{\la, t}(s)|_{W^{\eta,p}(\mathcal{O})}\leq c_{_\alpha }\left(\int_\lambda^s r^{(\alpha-1-\frac{\eta}{2})\frac{p-1}{p}}dr \right)^{\frac{p}{p-1}}\left(\int_\lambda^s |V^{\alpha}_t(r)|^p_{L^p(\mathcal{O})}dr\right)^{\frac{1}{p}}}\\[14pt]
\ds{\quad \quad \quad \quad \quad \quad \quad \quad \quad \quad  \leq c_{\alpha, p,t}\,\left(\int_\lambda^s |V^{\alpha}_t(r)|^p_{L^p(\mathcal{O})}dr\right)^{\frac{1}{p}}.}\end{array}\]
Notice that $c_{\alpha, p,t}\leq c_{\alpha, p}^T$ for all $t\in (0,T]$.

Next, if we also assume that  $\bar{p}>d+2$, $  \frac{d+2}{2\bar{p}}<\alpha<\frac{1}{2}$ and $\frac{d}{\bar{p}}<\eta<2\alpha -\frac{2}{\bar{p}}$, taking into account that in this case $W^{\eta,\bar{p}}(\mathcal{O})$ is  embedded into $E$, for every $p\geq \bar{p}$ we get
\begin{equation}\label{stimagammav_alpha}
    \sup_{s\in [\lambda,t]}| \delta\gamma_{\la, t}(s)|^p_E \leq c_{\alpha,p,t} \int_\lambda^t |V^{\alpha}_t(r)|^p_{L^p(\mathcal{O})}dr.\end{equation}
    
Now, for every $r\in [\lambda,t]$ and $\xi\in \Oc$, we have
$$|V^{\alpha}_t(r,\xi)|^p= \left|\sum_{i=1}^{\infty} \sqrt{\la_i}\int_\lambda^r (r-\rho)^{-\alpha}S(r-\rho)(Y_t(\rho)e_i) (\xi)d\beta_{i}(\rho) \right|^p,\quad $$
and from the Burkholder-Davis-Gundy inequality, the independence of the Brownian motions   \((\beta_i)_{i\in \mathbb{N}}\) and Hypothesis \ref{H3}, we get
\begin{align*}
&\mathbb{E}|V^{\alpha}_t(r,\xi)|^p \leq c  \, \mathbb{E}\left(\sum_{i=1}^{\infty} \la_i\int_\lambda^r (r-\rho)^{-2\alpha}\left|S(r-\rho)(Y_t(\rho)e_i) (\xi)\right|^2d\rho \right)^{p/2} \\[10pt]
&\quad \quad =c  \, \mathbb{E}\left(\sum_{i=1}^{\infty} \la_i |e_i|_{L^{\infty}(\mathcal{O})}^ {2/\theta}\int_\lambda^r (r-\rho)^{-2\alpha}\left|S(r-\rho)(Y_t(\rho)e_i) (\xi)\right|^2 |e_i|_{L^{\infty}(\mathcal{O})}^ {-2/\theta}d\rho \right)^{p/2}.
\end{align*}
Hence, we get
\begin{align}\label{stima v 1}\nonumber & \mathbb{E}|V^{\alpha}_t(r,\xi)|^p\\[10pt]
&\leq c  \,  \mathbb{E}\left(\int_\lambda^r (r-\rho)^{-2\alpha} \left(\sum_{i=1}^{\infty} \la_i^{\theta} |e_i|_{L^{\infty}(\mathcal{O})}^ {2}\right)^{1/\theta}\left( \sum_{i=1}^{\infty}|S(r-\rho)(Y_t(\rho)e_i) (\xi)|^{ 2\zeta} |e_i|_{L^{\infty}(\mathcal{O})}^ {-2 (\zeta-1)}\right)^{1/\zeta}  d\rho\right)^{p/2}\\[10pt]
\nonumber &\quad \quad \quad \quad  \leq c  \,  \mathbb{E}\left(\int_\lambda^r (r-\rho)^{-2\alpha} A^{1/\zeta}(r,\rho,\xi)  d\rho\right)^{p/2},
\end{align}
where $ \frac{1}{\theta} + \frac{1}{\zeta}=1$ and
\begin{equation}\label{def-A}
    A(r,\rho,\xi):=\sum_{i=1}^{\infty}|S(r-\rho)(Y_t(\rho)e_i) (\xi)|^{ 2\zeta} |e_i|_{L^{\infty}(\mathcal{O})}^ {-2 (\zeta-1)}.
\end{equation}

We have
\begin{align}\label{stimaSG1}
 & \nonumber|S(r-\rho)(Y_t(\rho)e_i) (\xi)|^2 = \Big( \int_{\O} K(r-\rho,\xi,y) Y_t(\rho,y)e_i(y) \,dy \Big)^2 \\[10pt]&\quad \quad\quad   \leq |e_i|^2_{L^{\infty}(\mathcal{O})}   \left(  \int_{\O} K(r-\rho,\xi,y)|Y_t(\rho,y)| \,dy \right)^2   =  |e_i|^2_{L^{\infty}(\mathcal{O})}  \big(S(r-\rho)|Y_t(\rho)|(\xi)\big)^2.
\end{align}
On the other hand, we have
\begin{align*}
&\sum_{i=1}^{\infty} |S(r-\rho)(Y_t(\rho)e_i)(\xi)| ^2 =   
\sum_{i=1}^{\infty} \langle  K(r-\rho,\xi,\cdot)Y_t(\rho),e_i \rangle ^2_{H}= | K(r-\rho,\xi,\cdot)Y_t(\rho)|^2_{H}  \\[10pt]&\quad \quad \quad = \int_\O 
K^2(r-\rho,\xi,y)Y_t ^2(\rho,y) \, dy,
\end{align*}
so that, in view of \eqref{bms3}, we deduce 
\begin{align}\label{stimaSG2}
  \nonumber &\sum_{i=1}^{\infty} |S(r-\rho)(Y_t(\rho)e_i)(\xi)| ^2  \leq c\, (r-\rho)^{-\frac{d}{2}}\int_\O 
K(r-\rho,\xi,y)Y_t^2(\rho,y) \, dy \\[9pt]&\quad \quad \quad \quad \quad \quad = c\, (r-\rho)^{-\frac{d}{2}}\,S(r-\rho)Y_t^2(\rho)(\xi).
\end{align}
Therefore, if we combine together 
\eqref{stimaSG1} and \eqref{stimaSG2}, we get 
\begin{align}\label{bms17}
&\nonumber A(r,\rho,\xi)\leq \left|S(r-\rho)|Y_t(\rho)|(\xi)\right|^{2(\zeta-1)} \sum_{i=1}^{\infty} |S(r-\rho)(Y_t(\rho)e_i )(\xi)| ^2 \\[10pt]
&\leq c \big(S(r-\rho)|Y_t(\rho)|(\xi)\big)^{2(\zeta-1)} (r-\rho)^{-\frac{d}{2}}\,S(r-\rho)Y_t^2(\rho)(\xi)\leq c\,  (r-\rho)^{-\frac{d}{2}}|Y_t(\rho)|_E^{2\zeta}.
	\end{align}
 Plugging the above inequality into \eqref{stima v 1}, we obtain
$$ \mathbb{E}|V^{\alpha}_t(r,\xi)|^p \leq c\, \mathbb{E}\left(\int_\lambda^r (r-\rho)^{-2\alpha-\frac{d}{2\zeta}} |Y_t(\rho)|_E^{2}\,d\rho\right)^{p/2},$$
and the Young inequality gives
\begin{align}\label{stimaconvalpha}
      &\nonumber\mathbb{E}\int_\lambda^s|V^{\alpha}_t(r)|^p_{L^p(\mathcal{O})}\,dr = \mathbb{E}\int_\lambda^s\int_{\mathcal{O}}|V^{\alpha}_t(r,\xi)|^p\,d\xi\,dr\\[10pt]      & \quad \quad \leq c  \, \left(\int_\lambda^s r^{-2\alpha-\frac{d}{2\zeta}}\,dr \right)  \int_\lambda^s \mathbb{E}|Y_t(r)|^p_E\,dr\leq c\,\int_\lambda^s \mathbb{E}|Y_t(r)|^p_E\,dr.
 \end{align}

Now, we claim that
\begin{equation}  \label{stima delta sigma} 
\mathbb{E}|Y_t(r)|^p_E  \leq c_t\, \mathbb{E}\, \sup_{\lambda\leq s \leq t}|X_1(s)- X_2(s)|^p_E.\end{equation} 
Actually, according to Hypothesis \ref{H1}, we have
\begin{align*}
  &|Y_t(r)|_E=\left|\sigma(X_1(r), \mathbb{E}(g(X_1(t))|\mathcal{F}_{r}))- \sigma(X_2(r),  \mathbb{E}(g(X_2(t))|\mathcal{F}_{r}))\right|_E \\[10pt] &
   \quad \quad \quad \leq L_\sigma \Big( |X_1(r)- X_2(r)|_E   +|\mathbb{E}\left(g(X_1(t)) - g(X_2(t)\right) |\mathcal{F}_{r})|_E \Big). 
\end{align*}
By taking into account what proved in Section \ref{sec3}, namely that  $\mathbb{E} (\,\cdot\, |\mathcal{F}_s) $ acts as a continuous linear operator, with operatorial norm 1,  from $L^p(\Omega,\mathcal{F}_t,\mathbb{P};E)$ into $L^p(\Omega,\mathcal{F}_\rho,\mathbb{P};E)$,  for all $\rho\in [0,t]$, we have
\[\mathbb{E}\,|\mathbb{E}\left(g(X_1(t)) - g(X_2(t)) |\mathcal{F}_{\rho}\right)|_E ^p\leq c\,\mathbb{E}\,|g(X_1(t)) - g(X_2(t))|_E^p
 \leq c\,L^p_g\,\mathbb{E}\,|X_1(t) - X_2(t)|_E^p,\]
 and \eqref{stima delta sigma} follows.

\smallskip

Finally plugging \eqref{stima delta sigma} in \eqref{stimaconvalpha} we deduce that
\begin{equation}
    \mathbb{E}\int_\lambda^s|V^{\alpha}_t(r)|^p_{L^p(\mathcal{O})}\,dr \leq c_t  \, \mathbb{E} \sup_{\lambda\leq s \leq t}|X_1(s)- X_2(s)|^p_E, \qquad s \in [\lambda,t],
 \end{equation}
 and  in view of  \eqref{stimagammav_alpha} we get that there exists some constant ${L_{\gamma_{\lambda,t}}(p)}>0$, which is increasing with respect to $t \in\,[\lambda,T]$, such that 
\begin{equation}
     \mathbb{E} \sup_{\lambda\leq s \leq t}|\gamma_{\la, t}(X_1)(s)- \gamma_{\la, t}(X_2)(s)|^p_E \leq 
   {L_{\gamma_{\lambda,t}}(p)}\, \mathbb{E} \sup_{\lambda\leq s \leq t}|X_1(s)- X_2(s)|^p_E.
 \end{equation}
\end{proof}

\section{Salin's solution map}\label{sec5}

In this section we adapt to the case of bounded domains with Dirichlet boundary conditions an argument introduced by Salins in \cite[Section 5]{Salins}. Such argument allows to prove the existence and uniqueness of mild solutions for reaction-diffusion equations with reaction terms satisfying Hypothesis \ref{H1}-2.
 
For every $T>0$ and   $z\in C([\lambda,t];E)$, with $0\leq \lambda<t\leq T$,  we consider the equation 
\begin{equation}
    \label{SalinsMild}
    u(s,\xi)=\int_\lambda^s S(s-r)F(u(r))(\xi)dr+z(s,\xi),
\end{equation}
for $s \in\,[\lambda,t]$ and $\xi \in\,\Oc$.

\begin{Theorem}\label{PropSalins} Under Hypotheses \ref{H1} and \ref{H2}, for any $T>0$ and  $z\in C([\lambda,t];E)$, with $0\leq \lambda<t\leq T$, 
there exists a unique solution $u\in C([\lambda,t];E)$ to equation  \eqref{SalinsMild}.

Moreover, if we denote such  solution  by $\mathcal{M}_{\lambda,t}(z)$, we have that  $\mathcal{M}_{\lambda, t}$  is a Lipschitz map  from $ C([\lambda,t];E)$ into itself. Finally, if we  denote by $L_{\mathcal{M}_{\lambda,t}}$ the Lipschitz constant of $\mathcal{M}_{\lambda, t}$  in $C([\lambda,t];E)$, we have  
\begin{equation}
	\label{bms13}
	\sup_{0\leq \lambda<t\leq T}L_{\mathcal{M}_{\lambda,t}}=:L_{\mathcal{M}}(T)<\infty.
\end{equation}
\end{Theorem}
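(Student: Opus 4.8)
The plan is to neutralize the absence of Lipschitz continuity and growth control on $f$ by a regularization procedure: I prove all the relevant estimates for Lipschitz approximants of $f$, \emph{uniformly} in the approximation, and then pass to the limit. The only structural property of $f$ that survives the loss of regularity is the one-sided bound \eqref{ipotisi su f}, and the whole argument is designed so that this bound is exploited through a maximum-principle computation, which is legitimate because $S(s)$ is positivity preserving and contractive on $E$ (Hypotheses \ref{H1}, \ref{H2}).

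\textbf{Regularization and the approximate equation.} Since $u\mapsto ku-f(u)$ is continuous and non-decreasing by \eqref{ipotisi su f}, I replace it by its Yosida regularizations, obtaining Lipschitz functions $f_n:\mathbb R\to\mathbb R$ that still satisfy \eqref{ipotisi su f} with the \emph{same} constant $k$, remain uniformly bounded on bounded sets, and converge to $f$ uniformly on compacta. Writing $F_n(x)(\xi)=f_n(x(\xi))$, each $F_n$ is globally Lipschitz on $E$, so the map $u\mapsto \int_\lambda^\cdot S(\cdot-r)F_n(u(r))\,dr+z$ is a contraction on $C([\lambda,t];E)$ once endowed with the weighted norm $\sup_s e^{-\beta(s-\lambda)}|u(s)|_E$ with $\beta$ large, because $S(s)$ is a contraction on $E$. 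This yields, for each $n$, a unique solution $u_n=:\mathcal M^n_{\lambda,t}(z)$.

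\textbf{The uniform Lipschitz estimate (the crux).} Given data $z_1,z_2$ with solutions $u_1,u_2$, I work with the reaction parts $v_i:=u_i-z_i=\int_\lambda^\cdot S(\cdot-r)F_n(u_i(r))\,dr$; this substitution is essential, since it keeps $z$ out of the differentiated equation so that only $|z_1-z_2|_E$, never its derivatives, ever appears. Setting $W=v_1-v_2$ and $\zeta=z_1-z_2$, the function $W$ solves $\partial_s W=AW+[F_n(u_1)-F_n(u_2)]$ with $W(\lambda)=0$, and $W$ vanishes on $\partial\mathcal O$, so any positive spatial maximum of $W(s,\cdot)$ is attained at an interior point $\xi^\ast$, where the divergence-form elliptic term is non-positive. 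Since $u_1(\xi^\ast)-u_2(\xi^\ast)=\max_\xi W(s,\cdot)+\zeta(s,\xi^\ast)$, the one-sided bound \eqref{ipotisi su f} (discarding the term of sign opposite to $W$) gives, as long as $\max_\xi W(s,\cdot)\ge |\zeta(s)|_E$,
\[
\frac{d}{ds}\max_{\xi}W(s,\xi)\le k^+\,\max_{\xi}W(s,\xi)+|k|\,|\zeta(s)|_E,
\]
and symmetrically for $-\min_\xi W$. A Gronwall argument then yields a constant $L(T)$, depending only on $k$ and $T$ and increasing in $t-\lambda$, such that $\sup_s|u_1(s)-u_2(s)|_E\le L(T)\sup_s|z_1(s)-z_2(s)|_E$, \emph{uniformly in} $n$.

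\textbf{Uniform bound, limit, and conclusion.} Taking $z_2=0$ above and bounding $\mathcal M^n_{\lambda,t}(0)$ by comparison with constant super/subsolutions (possible since $|f_n(0)|$ is bounded in $n$) gives $\sup_n|u_n|_{C([\lambda,t];E)}=:R<\infty$. Applying the same maximum-principle computation to $u_n-u_m$, now with an extra forcing $F_n(u_m)-F_m(u_m)$, produces $\sup_s|u_n(s)-u_m(s)|_E\le C(T)\sup_{|x|\le R}|f_n(x)-f_m(x)|$, which tends to $0$ as $n,m\to\infty$ because $f_n\to f$ uniformly on $[-R,R]$. Hence $(u_n)$ is Cauchy in $C([\lambda,t];E)$; its limit $u$ solves \eqref{SalinsMild} (pass to the limit in the integral using $u_n\to u$ uniformly and $f_n\to f$ uniformly on $[-R,R]$), the estimate of the previous step passes to the limit to give the Lipschitz continuity of $\mathcal M_{\lambda,t}$ with the same $L(T)$, and the case $z_1=z_2$ yields uniqueness. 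Since $L(T)$ depends only on $k$ and $T$ and is monotone in the interval length $t-\lambda\le T$, \eqref{bms13} follows. The main obstacle is precisely the third step: continuous dependence cannot come from a naive Gronwall bound on $|F_n(u_1)-F_n(u_2)|_E$, but only from the sign information in \eqref{ipotisi su f}, whose extraction requires evaluating at the spatial maximum of $W$; regularizing $f$ first is what makes $u_n$ regular enough (via analyticity of $S(s)$ for $s>\lambda$) for this maximum principle to be rigorous before the limit is taken.
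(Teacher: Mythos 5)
Your proof is correct, and its core --- Yosida regularization of the monotone part of $f$, followed by extraction of the one-sided bound \eqref{ipotisi su f} at an interior spatial maximum of the difference of the \emph{reaction parts} $v_i=u_i-z_i$, with the case split according to whether $|\bar v(s)|_E$ exceeds $|\bar z(s)|_E$ --- is exactly the paper's argument; the paper merely packages your ``Gronwall as long as $\max_\xi W\ge|\zeta(s)|_E$'' step as Salins' differential inequality with the indicator $\mathrm{I}_{\{|\bar v(s)|_E\le|\bar z(s)|_E\}}$, arriving at the same bound \eqref{bms12}. Where you genuinely diverge is in the construction of the solution: the paper first proves the cruder a priori bound of Lemma \ref{Salins a priori} (an energy estimate on $v^2$, which is why $|F(z)|_E$ appears in \eqref{bms10}), then shows the approximants $v_n$ are uniformly H\"older in space and time using the kernel bounds \eqref{bms7}, and extracts a limit by Arzel\`a--Ascoli; you instead run the maximum-principle stability estimate once more on $u_n-u_m$ (where the $\zeta$-term vanishes and the only forcing is $f_n-f_m$ on the ball of radius $R$) to show that $(u_n)$ is Cauchy, and you obtain the uniform bound $R$ from the Lipschitz estimate itself together with a super/subsolution bound on $\mathcal{M}^n_{\lambda,t}(0)$. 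Your route is more economical: it avoids the compactness argument and the gradient and time-derivative kernel estimates entirely, converges the whole sequence rather than a subsequence, and yields the cleaner a priori bound $c_T(1+|z|_{C([\lambda,t];E)})$ in place of \eqref{bms10}; the price is that the uniform-in-$n$ Lipschitz estimate must be established \emph{before} existence, which is legitimate since it is proved at the level of the regular approximants. One presentational caveat: like the paper, you assert classical differentiability of $v_n$ from analyticity of $S(s)$; for the regularized problems this is justified by a standard bootstrap (continuity of $F_n(u_n)$ in time upgrades to H\"older continuity, then parabolic regularity applies), and performing the maximum-principle computation only on the regularized solutions, as you do, is if anything the more defensible order of operations.
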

Before proving Theorem \ref{PropSalins}, we need some preliminary results.
\begin{Lemma}\label{Salins a priori}  If  $u, z\in C([\lambda,t];E)$  verify \eqref{SalinsMild}, for all $s\in [\lambda,t]$ and $\xi \in \Oc$, then
\begin{equation}\label{bms10}
    \sup_{s \in\,[\lambda,t]}|u(s)|_E\leq c_T \left( \sup_{s\in [\lambda,T]} |F(z(s))|_E+ \sup_{s\in [\lambda,t]}|z(s)|_E\right),
\end{equation}
for some constant $c_T$ which depends only on $T$, $f$ and $K$.
\end{Lemma}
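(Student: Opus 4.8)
The plan is to set $w:=u-z$ and to notice that, by subtracting $z$ from \eqref{SalinsMild},
\[
w(s)=\int_\lambda^s S(s-r)F(u(r))\,dr=\int_\lambda^s S(s-r)F(w(r)+z(r))\,dr,\qquad w(\lambda)=0,
\]
so that $w$ is the mild solution of $\partial_s w=Aw+F(w+z)$. Since $S(\tau)h$ obeys the homogeneous Dirichlet condition for every $\tau>0$ and $h\in E$, each integrand vanishes on $\partial\mathcal{O}$, and hence $w(s,\cdot)=0$ on $\partial\mathcal{O}$ for every $s$. Writing $M:=\sup_{s\in[\lambda,t]}|F(z(s))|_E$ and $N:=\sup_{s\in[\lambda,t]}|z(s)|_E$, and $k_+:=\max(k,0)$, I would introduce the scalar profile $\phi$ solving $\phi'(s)=k_+\phi(s)+M$, $\phi(0)=0$, for which $0\leq\phi(s)\leq c_T M$ on $[0,T]$ with $c_T=(e^{k_+T}-1)/k_+$ (and $c_T=T$ when $k\leq 0$). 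The core claim is the two–sided comparison $|w(s,\xi)|\leq\phi(s-\lambda)$ for all $s\in[\lambda,t]$ and $\xi\in\Oc$. Once this is established, \eqref{bms10} follows at once from $|u(s)|_E\leq|w(s)|_E+|z(s)|_E\leq c_TM+N$.

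To prove the comparison I would run a maximum–principle argument on the envelope $\theta(s):=\sup_{\xi\in\Oc}w(s,\xi)$. If $\theta(s)>0$, then, because $w$ vanishes on $\partial\mathcal{O}$, the supremum is attained at an interior point $\xi_s$, where $\nabla w(s,\xi_s)=0$ and $Aw(s,\xi_s)\leq 0$ by the ellipticity in Hypothesis \ref{H2}. Hence, at least formally,
\[
\frac{d}{ds}\theta(s)\leq \partial_s w(s,\xi_s)=Aw(s,\xi_s)+f\big(w(s,\xi_s)+z(s,\xi_s)\big)\leq f\big(\theta(s)+z(s,\xi_s)\big).
\]
Here is the only place where the structure of $f$ enters: since $\theta(s)\geq 0$, the quasi-dissipativity \eqref{ipotisi su f}, applied with $u_1=z(s,\xi_s)\leq u_2=\theta(s)+z(s,\xi_s)$, gives $f(\theta(s)+z(s,\xi_s))-f(z(s,\xi_s))\leq k\,\theta(s)\leq k_+\theta(s)$, so that $\frac{d}{ds}\theta(s)\leq k_+\theta(s)+M$ and therefore $\theta(s)\leq\phi(s-\lambda)$. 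The symmetric argument applied to the lower envelope $\inf_{\xi}w(s,\xi)$ (interior minimum, $Aw\geq 0$, and \eqref{ipotisi su f} used on the side $w\leq 0$) yields $w(s,\xi)\geq-\phi(s-\lambda)$, completing the claim. I would stress that \emph{evaluating $f$ exactly at the extremal point} is what makes the one-sided bound usable: no growth control on $f$ is needed, and the possibly wild behaviour of $f$ at points where $w$ has the opposite sign never enters the estimate.

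The main obstacle is that $u\in C([\lambda,t];E)$ is only a mild solution, so neither $\frac{d}{ds}\theta$ nor the pointwise inequality $Aw(s,\xi_s)\leq 0$ is literally available, while $f$ is merely continuous. I would make the argument rigorous by a double regularization: mollify $f$ into smooth functions $f_n\to f$, which preserves \eqref{ipotisi su f} with the same constant $k$ (convolution maps the non-increasing function $r\mapsto f(r)-kr$ to a non-increasing one), and replace $z$ by smooth $z_n\to z$ in $C([\lambda,t];E)$; the resulting equations admit genuine classical solutions $w_n$, to which the maximum principle applies verbatim, giving $|w_n(s,\xi)|\leq\phi_n(s-\lambda)$, after which I would pass to the limit using the uniform bound and the continuity of the data. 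As an alternative that avoids locating the extremal point, I would test the equation for $w$ against $|w|^{p-2}w$: integration by parts makes the $A$–term non-positive under the Dirichlet conditions, the sign analysis $\big(f(w+z)-f(z)\big)\,|w|^{p-2}w\leq k|w|^p$ holds on both $\{w>0\}$ and $\{w<0\}$ by \eqref{ipotisi su f}, and a Gronwall estimate followed by $p\to\infty$ recovers the same $L^\infty$ bound, with a constant depending only on $T$, on $f$ through $k$, and on $|\mathcal{O}|$; the regularization needed to justify the computation is the same as above.
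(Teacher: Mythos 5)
Your core argument is correct and lands on the same estimate, but by a slightly different route than the paper. The paper sets $v:=u-z$, derives the parabolic differential inequality $\partial_s v^2\leq \mathcal{A}v^2+(2k+1)v^2+f^2(z)$ from the splitting $2vf(u)=2v\bigl[f(z+v)-f(z)\bigr]+2vf(z)\leq (2k+1)v^2+f^2(z)$ together with $v\,\mathcal{A}v\leq \mathcal{A}(v^2)$, and then bounds $e^{-(2k+1)s}v^2$ by a heat-kernel comparison. You instead differentiate the sup/inf envelopes of $v$ at the extremal point, which is exactly the technique the paper itself deploys later for the Lipschitz estimate \eqref{bms12} in the proof of Theorem \ref{PropSalins}. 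Both routes exploit the same single idea — evaluate the increment of $f$ between $z$ and $u=z+v$ so that \eqref{ipotisi su f} controls it by $k v$ and only $f(z)$ survives — and your version even gives a bound linear in $\sup|F(z)|_E$ without the Young-inequality detour. So as an a priori computation the proposal is sound.

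The genuine weak point is the rigorization. Your plan is to mollify $f$ into $f_n$, smooth $z$ into $z_n$, solve the regularized problems classically, and pass to the limit. Two objections. First, since $f$ carries no growth bound, the mollified $f_n$ need not be globally Lipschitz, so global classical solvability of the regularized problem is not free — it requires an a priori bound of precisely the type being proved (the paper's existence lemma avoids this by using the globally Lipschitz Yosida-type approximations $f_n=\varphi_n+k\,\mathrm{id}$). Second, and more seriously, this scheme produces a bound for the solutions $w_n$ of the \emph{approximate} equations; to transfer it to the \emph{given} $u$ you must show $w_n\to u-z$, which requires a uniqueness/stability statement for \eqref{SalinsMild} that is only established afterwards (via \eqref{bms12}) and is not invoked in your argument. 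The clean fix, and what the paper actually does, is to work directly with the given $v=u-z=\int_\lambda^{\cdot}S(\cdot-r)F(u(r))\,dr$: since $F(u(\cdot))\in C([\lambda,t];E)$ and the semigroup is analytic, $v$ has enough interior space-time regularity for the pointwise computation to be performed on $v$ itself, with no approximation of the equation and hence no identification-of-the-limit issue. Your $L^p$-energy alternative with $p\to\infty$ is essentially the paper's $v^2$ argument in integrated form and suffers from the same rigorization caveat unless it too is run directly on the given $v$.
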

\begin{proof} If we define $v:=u-z$, we have 
$$
    v(s,\xi)=\int_\lambda^s S(s-r)F(u(r))(\xi)dr=\int_\lambda^s\int_{\mathcal{O}} K(s-r,\xi, y)f(u(r,y)) dy\,dr.
$$
Since the semigroup $S(s)$ is analytic, we have that $v(s,\cdot)\in C^2(\O)$, for all $s \in\,[\lambda,t]$, and  satisfies the equation
\[\left\{
\begin{array}{l}
\ds{\partial_s v(s,\xi)=\mathcal{A} v(s,\xi)+f(v(s,\xi)),}\\[10pt]
\ds{v(\lambda,\xi)=0,\ \ \xi \in\,\bar{\mathcal{O}},\ \ \ \ \ v(s,\xi)=0,\ \ \xi \in\,\partial\mathcal{O}.}	
\end{array}\right.\]
 In particular, we have
\begin{equation}\label{sbm1-bis}
    \partial_s\, v^2(s,\xi)=v(s,\xi)\sum_{i,j=1}^d \frac{\partial}{\partial \xi_i}\left(a_{ij}\, \frac{\partial v}{\partial \xi_j}(s,\cdot)\right)(\xi)+\rho(s,\xi),
\end{equation}
where
\begin{align} \label{sbm3}
    \rho(s,\xi):= 2 v(s,\xi)f(u(s,\xi))= \,& 2 v(s,\xi)\left[f(z(s,\xi)+v(s,\xi))-f(z(s,\xi))\right]+  2 v(s,\xi)f(z(s,\xi)) \nonumber \\[10pt]  \quad \quad \quad  \leq  & (2k+1)v^2(s,\xi)+ f^2(z(s,\xi)).
\end{align}
We observe that 
\begin{align*}
  & v(s,\xi)\sum_{i,j=1}^d \frac{\partial}{\partial \xi_i}\left(a_{ij}  \frac{\partial v}{\partial \xi_j}(s,\cdot)\right)(\xi) =\\
  &\quad \quad \quad \quad \quad \quad  = \frac{1}{2} \sum_{i,j=1}^d \frac{\partial}{\partial \xi_i}\left(a_{ij} \frac{\partial v^2}{\partial \xi_j}(s,\cdot)\right)(\xi) - \sum_{i,j=1}^d a_{ij} (\xi) \frac{\partial v}{\partial \xi_i}(s,\xi) \frac{\partial v}{\partial \xi_j}(s,\xi)
  \leq  \mathcal{A}v^2(s,\xi),
\end{align*} 
so that, from \eqref{sbm1-bis} and \eqref{sbm3} we obtain
\[\partial_s v^2(s,\xi)\leq \mathcal{A}v^2(s,\xi)+(2k+1)v^2(s,\xi)+f^2(z(s,\xi)).\]
Thus if we define $\vartheta(s,\xi):=\exp(-(2k+1)s)v^2(s,\xi)$, we get
$$\begin{cases}
     \partial_s \vartheta(s,\xi)\leq A\, \vartheta(s,\xi)+e^{-(2k+1)s}f^2(z(s,\xi)),\ \ \ \ \ s\in [\lambda,t],\\[10pt]
     \vartheta(s,\xi)=0, \ \ \  \xi\in\partial \mathcal{O},\ \ \  s\in [\lambda,t],\ \ \ \ \ \ \ 
     \vartheta(0,\xi)=0 , \ \ \ \  \xi\in\,\Oc,
     \end{cases}$$
     and this implies
     $$ \vartheta(s,\xi)\leq \int_\lambda^se^{-(2k+1)r}\int_{\mathcal{O}} K(s-r,\xi, y)f^2(z(r,y))dy\,dr.$$
 Since  $\int_{\mathcal{O}}  K(s,\xi,y)dy \leq 1$, this gives
    \[\vartheta(s,\xi)\leq c\; \sup_{r\in [\lambda,t],\, \xi \in \Oc} f^2(z(r,\xi)).\] 
    In particular, we get
    \[\sup_{s \in\,[\lambda,t]}\vert v(s)\vert_E\leq c_T\,\sup_{s \in\,[\lambda,t]}\vert f(z(s))\vert_E,\]
    and \eqref{bms10} follows.  \end{proof}

\begin{Lemma} For every  $z\in C([\lambda,t];E)$ 
there exists a solution $u\in C([\lambda,t];E)$ to equation  \eqref{SalinsMild}. 
\end{Lemma}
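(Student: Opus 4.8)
The plan is to construct the solution as a limit of solutions of the same equation in which the merely continuous reaction term $f$ is replaced by Lipschitz continuous approximations, and then to pass to the limit using the a priori bound of Lemma \ref{Salins a priori} together with the smoothing action of the semigroup $S(s)$.

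First, since \eqref{ipotisi su f} means that $\tilde f(u):=f(u)-ku$ is non-increasing, a standard inf-convolution (equivalently, Yosida) procedure produces Lipschitz continuous functions $f_n:\R\to\R$, each satisfying \eqref{ipotisi su f} with the \emph{same} constant $k$, and converging to $f$ uniformly on compact subsets of $\R$. For every $n$ the problem \eqref{SalinsMild} with $F$ replaced by $F_n$, $F_n(x)(\xi):=f_n(x(\xi))$, has a unique solution $u_n\in C([\lambda,t];E)$, obtained by the contraction mapping principle (iterated on subintervals), since $F_n$ is Lipschitz from $E$ into $E$. Because all $f_n$ share the constant $k$, and the bounding constant $c_T$ in Lemma \ref{Salins a priori} depends on $f$ only through $k$ and $K$, and since $z$ takes values in a fixed compact set so that $\sup_n\sup_{s}|f_n(z(s))|_E<\infty$, estimate \eqref{bms10} yields a bound $\sup_n\sup_{s\in[\lambda,t]}|u_n(s)|_E\leq R<\infty$ uniform in $n$. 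In turn, the uniform convergence $f_n\to f$ on $[-R,R]$ gives $M:=\sup_n\sup_{s}|F_n(u_n(s))|_E<\infty$.

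Next I would establish the precompactness of $\{u_n\}$ in $C([\lambda,t];E)$ through $v_n:=u_n-z=\int_\lambda^{\cdot}S(\cdot-r)F_n(u_n(r))\,dr$. For the spatial regularity, fixing $\eta\in(0,2)$ and $p$ large with $\eta-d/p>0$, estimate \eqref{stima-reg-semigruppo} and $|F_n(u_n(r))|_{L^p}\leq c\,M$ give $\sup_n\sup_s|v_n(s)|_{W^{\eta,p}(\mathcal O)}<\infty$; since $W^{\eta,p}(\mathcal O)$ embeds compactly into $E$, the set $\{v_n(s):n\in\N,\ s\in[\lambda,t]\}$ is precompact in $E$. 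For the temporal equicontinuity, splitting
\[
v_n(s_2)-v_n(s_1)=\int_{s_1}^{s_2}S(s_2-r)F_n(u_n(r))\,dr+\int_\lambda^{s_1}\big(S(s_2-s_1)-I\big)S(s_1-r)F_n(u_n(r))\,dr,
\]
the first term is bounded in $E$ by $M(s_2-s_1)$ using $|S(s)x|_E\leq|x|_E$, while for the second one the analyticity of $S(s)$ yields $|(S(\delta)-I)S(\tau)x|_E\leq c\,\delta^{\theta}\tau^{-\theta-d/(2p)}|x|_E$ for a suitable $\theta\in(0,1)$ with $2\theta=\eta-d/p$, so that the term is bounded by $c\,M\,(s_2-s_1)^{\theta}$, uniformly in $n$, once $\theta+d/(2p)<1$. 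Hence $\{v_n\}$ is uniformly bounded and equicontinuous with values in a precompact subset of $E$, and the vector-valued Arzel\`a--Ascoli theorem provides a subsequence such that $u_{n_k}=v_{n_k}+z\to u$ in $C([\lambda,t];E)$.

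Finally I would pass to the limit. Since $u_{n_k}\to u$ uniformly with values in a fixed compact set and $f_{n_k}\to f$ uniformly there, one gets $F_{n_k}(u_{n_k})\to F(u)$ in $C([\lambda,t];E)$; combined with $|S(s-r)x|_E\leq|x|_E$ this lets us pass to the limit under the integral in $u_{n_k}(s)=\int_\lambda^sS(s-r)F_{n_k}(u_{n_k}(r))\,dr+z(s)$, so that $u$ solves \eqref{SalinsMild}. The main obstacle is precisely this compactness step: since $f$ is neither Lipschitz nor bounded, the convergence cannot be produced by a contraction and must instead be extracted from the uniform a priori bound of Lemma \ref{Salins a priori} and the parabolic smoothing of $S(s)$, taking care that both the regularizing exponents and the approximations $f_n$ keep every estimate uniform in $n$.
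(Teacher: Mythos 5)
Your proposal is correct and follows essentially the same route as the paper: Yosida-type Lipschitz approximations $f_n$ preserving the one-sided constant $k$, a uniform bound from Lemma \ref{Salins a priori} via $|f_n(u)|\leq|f(u)|+|k||u|$, compactness of $\{u_n\}$ by Arzel\`a--Ascoli, and passage to the limit using local uniform convergence of $f_n$. The only (harmless) difference is in the compactness step, where you use the smoothing estimate \eqref{stima-reg-semigruppo} with a compact Sobolev embedding and the analyticity of $S(s)$, while the paper derives uniform space--time H\"older equicontinuity directly from the kernel bounds \eqref{bms7}.
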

\begin{proof}
If we set  $\varphi(u):=f(u)- ku$, there exists some $n_0 \in\,\mathbb{N}$ such that for every $n\geq n_0$ and $u \in\,\mathbb{R}$  we can define 
\[\varphi_n(u):= n\left(\left(I-\varphi/n\right)^{-1}(u)-u\right),\ \ \ \ \ \  f_n(u)=  \varphi_n(u) + ku.\] As shown for example in  \cite[Proposition D.11]{DpZ1} we have
\begin{equation}
  \label{dissipatività unif in n}
|f_n(u)|\leq |f(u)|+ |k||u|,\ \ \ u\in \mathbb{R}.   
\end{equation} 
 For every $n\geq n_0$ the function $f_n:\mathbb{R}\to \mathbb{R}$ is Lipschitz-continuous and  
 \[f_n(u_2)-f_n(u_1) \leq k(u_2-u_1),\ \ \ \ \ \ u_1<u_2.\]
 Moreover, for every $R>0$
 \begin{equation}
 \label{bms8}
 \lim_{n\to\infty}\,\sup_{|u|\leq R}|f_n(u)-f(u)|=0.	
 \end{equation} 
Being $f_n$ Lipschitz-continuous, there exists a unique $u_n\in C([\lambda,t];E)$ such that 
\begin{equation}
    \label{SalinsMild_n}
    u_n(s,\xi)=\int_\lambda^s\int_{\mathcal{O}} K(s-r,\xi, y)f_n(u_n(r,y)) dy\,dr+z(s,\xi),\ \ \ \ \ s \in\,[\lambda,t],\ \ \xi \in\,\Oc.
\end{equation}
Moreover, thanks to  Lemma \ref{Salins a priori},  for all $s \in\,[\lambda,t]$ and $n\in \mathbb{N}$
\begin{equation}
    |u_n(s)|_E\leq c_T \left( \sup_{s\in [0,T]} |f_n(z(s))|_E+ \sup_{s\in [0,T]}|z(s)|_E\right).
\end{equation}
Thus, in view of     \eqref{ipotisi su f} and \eqref{dissipatività unif in n}
\begin{equation}\label{6.9}
 \sup_{n\in \mathbb{N}} \,\sup_{s\in[\lambda,t]} |u_n(s)|_E <\infty,\ \ \ \ \ \ \ \ \ \ 
 \sup_{n\in \mathbb{N}} \,\sup_{s\in[\lambda,t]} |f_n(u_n(s))|_E <\infty.  
\end{equation}

Now, if we set $v_n:=u_n-z$, we have
\begin{equation}\label{bms39}
    v_n(s,\xi)=\int_\lambda^s\int_{\mathcal{O}} K(s-r,\xi, y)f_n(u_n(r,y)) dy\,dr,\ \ \ \ \ s \in\,[\lambda,t],\ \ \xi \in\,\Oc.
\end{equation}
Consequently for $\xi_i\in \Oc$, $i=1,2$, and $s \in\,[\lambda,t]$
\begin{equation*}
    |v_n(s,\xi_2)-v_n(s,\xi_1)|\leq\int_\lambda^s\int_{\mathcal{O}} | K(s-r,\xi_2, y)-  K(s-r,\xi_1, y)| \,|f_n(u_n(r,y))| dy\,dr.
\end{equation*}
Therefore, , for   any $\eta\in (0,1)$
\begin{align*}
    |v_n(s,\xi_2)-v_n(s,\xi_1)|\leq c\,\int_\lambda^s\left(\int_{\mathcal{O}} ( K(s-r,\xi_2, y) +  K(s-r,\xi_1, y)) dy\right)^{1-\eta}dr \\ \times \int_\lambda^s\left(\int_{\mathcal{O}}  \int_0^1 |\nabla_{\xi} K(s-r, \theta\xi_1+(1-\theta)\xi_2, y)| d\theta\,  dy\right)^{\eta}dr\,  |\xi_2-\xi_1|^\eta, 
\end{align*}
and, thanks to \eqref{bms7}, we conclude
\begin{equation*}
     |v_n(s,\xi_2)-v_n(s,\xi_1)|\leq 
     c\, |\xi_2-\xi_1|^{\eta}\int_\lambda^s (s-r)^{-\eta\,(d+1)/2} dr.
\end{equation*}
In particular, if $\eta< 2/(d+1)$, then $v_n(s)$ is $\eta$-H\"{older}-continuous,  uniformly with respect to $s\in[\lambda,t]$ and $n\geq n_0$.

In a similar way if $\lambda\leq s_1< s_2\leq t$ and $\xi\in\Oc$
\begin{align*}
    &|v_n(s_2,\xi)-v_n(s_1,\xi)|\leq\int_\lambda^{s_1}\int_{\mathcal{O}} | K(s_1-r,\xi, y)-  K(s_2-r,\xi, y)| \,|f_n(u_n(r,y))| dy\,dr \\[10pt]& \quad \quad \quad \quad \quad + 
    \int_{s_1}^{s_2}\int_{\mathcal{O}} K(s_2-r,\xi, y)| \,|f_n(u_n(r,y))|\, dy\,dr.
\end{align*}
Proceeding as above we get
$$\int_\lambda^{s_1}\int_{\mathcal{O}} | K(s_1-r,\xi, y)-  K(s_2-r,\xi, y)| \,|f_n(u_n(r,y))| dy\,dr \leq c (s_2-s_1)^{\alpha}\int_\lambda^{s_1} (s_1-r)^{-\alpha(d+2)/2} dr,
$$
and
$$  \int_{s_1}^{s_2}\int_{\mathcal{O}} K(s_2-r,\xi, y)| \,|f_n(u_n(r,y))| dy\,dr\leq  c\int_{s_1}^{s_2}\int_{\mathcal{O}} K(s_2-r,\xi, y) dy\,dr \leq c(s_2-s_1).$$
This implies that if $\alpha< 2/(d+2)$ the function  $v_n(\cdot,\xi )$ is $\alpha$-H\"{older}-continuous in $s$, uniformly with respect to $\xi \in\,\Oc$ and $n\geq n_0$.
Consequently, by Arzel\`a-Ascoli Theorem  there exists $v \in C([\lambda,t];E)$ with 
\[\lim_{n\to\infty}\sup_{s \in\,[\lambda,t]}\vert v_n(s)- v(s)\vert_E=0. \]

Finally, recalling that $u_n= v_n+z$, thanks to \eqref{bms8} and  \eqref{6.9}, we can pass to the limit in \eqref{bms39} and we obtain

\begin{equation*}
    v(s,\xi)=\int_\lambda^s\int_{\mathcal{O}} K(s-r,\xi, y)f(v(r,y))+z(r,y)) dy\,dr.
\end{equation*}
 Thus, if we set $u:=v+z$, we can conclude that $u$ solves equation \eqref{SalinsMild}. \end{proof}

 \subsection{Proof of Theorem \ref{PropSalins}}
Let $z_i\in C([\lambda,t];E)$, for $i=1, 2$, and let   $v_i:=u_i-z_i$, where $u_i$ is a solution of equation \eqref{SalinsMild} associated with $z_i$. Moreover let $\bar{z}:=z_2-z_1$, $\bar{v}:=v_2-v_1$, and $\bar{f}:=f(v_2+z_2)-f(v_1+z_1)$. 
We have 
\begin{equation*}\label{eq per vn}
    \bar{v}(s,\xi)=\int_\lambda^s\int_{\mathcal{O}} K(s-r,\xi, y)\bar{f}(r,y) dy\,dr, \ \ \ \ \ \ \ s \in\,[\lambda,t],\ \ \ \xi \in\,\Oc,
\end{equation*}
and $\bar{v}(s,\xi)=0$, for $\xi \in \partial \mathcal{O}$, so that for all $s\in[\lambda,t]$ there exists $\bar{\xi}(s) \in\,\mathcal{O}$ such that 
\[\sup_{\xi\in \Oc}|\bar{v}(s,\xi)|=|\bar{v}(s,\bar{\xi}(s))|.\]
In particular,  $\nabla_\xi\bar{v}(s,\bar{\xi}(s))=0 $ and
$$\sign(\bar{v}(s,\bar{\xi}(s)))\,\mathcal{A}\bar{v}(s)(\bar{\xi}(s))=\sign(\bar{v}(s,\bar{\xi}(s)))\textrm{Tr}\left[a(\bar{\xi}(s))D^2_{\xi}\,\bar{v}(s,\bar{\xi}(s))\right]\leq 0,$$ 
where $a(\xi)$ is the matrix of entries $a_{ij}(\xi)$,  $i,j\in 1,...,d$, and $D^2_{\xi}{v}$ is the Hessian matrix of $v$. Indeed, if $\bar{v}(s,\bar{\xi}(s))>0$ then $\bar{\xi}(s)$ is a maximum point and $D^2_{\xi}\,\bar{v}(s,\bar{\xi}(s))$  is negative semidefinite and the converse holds if  $\bar{v}(s,\bar{\xi}(s))<0$. Moreover 
\[\sign(\bar{v}(s,\bar{\xi}(s)))\,\bar{v}(s,\bar{\xi}(s))  =|\bar{v}(s)|_E.\]


By proceeding as in \cite{Salins}, we have
\begin{align*} 
&\frac{d}{ds}^-|\bar{v}(s)|_E =\sign(\bar{v}(s,\bar{\xi}(s)))\,\partial_s\bar{v}(s,\bar{\xi}(s))=\sign(\bar{v}(s,\bar{\xi}(s)))\, \left(\mathcal{A}\bar{v}(s,\bar{\xi}(s))+\bar{f}(s,\bar{\xi}(s))\right) \\[10pt]
& \quad \quad \quad \quad \quad \quad \quad \quad \leq \sign(\bar{v}(s,\bar{\xi}(s))) \bar{f}(s,\bar{\xi}(s))=:\Phi(s).
\end{align*}

Now, if $\sign(\bar{v}(s,\bar{\xi}(s)))=\sign(\bar{v}(s,\bar{\xi}(s))+\bar{z}(s,\bar{\xi}(s)))$ then
\begin{align*} \Phi(s)= &\sign(\bar{v}(s,\bar{\xi}(s))+\bar{z}(s,\bar{\xi}(s)))\Big( {f}(v_2(s,\bar{\xi}(s))+ z_2(s,\bar{\xi}(s)))  -  {f}(v_1(s,\bar{\xi}(s))+ z_1(s,\bar{\xi}(s))) \Big)\\[10pt] &\quad  \leq
 k\sign(\bar{v}(s,\bar{\xi}(s))+\bar{z}(s,\bar{\xi}(s)))  (\bar{v}(s,\bar{\xi}(s))+\bar{z}(s,\bar{\xi}(s)))\leq k\,\vert \bar{v}(s)\vert_E+k\,\vert \bar{z}(s)\vert_E. 
\end{align*}
On the other hand, if $\sign(\bar{v}(s,\bar{\xi}(s))) \neq \sign(\bar{v}(s,\bar{\xi}(s))+\bar{z}(s,\bar{\xi}(s)))$  then $| \bar{z}(s, \bar{\xi}(s))|> | \bar{v}(s, \bar{\xi}(s))|$ and consequently 
$$  | \bar{v}(s)|_E\leq  | \bar{z}(s)|_E.$$ 
Thus, we conclude that 
 \[\Phi(s)\leq k |\bar{v}(s)|_E+  k |\bar{z}(s)|_E+\textrm{I}_{\{\sign(\bar{v}(s,\bar{\xi}(s))) \neq \sign(\bar{v}(s,\bar{\xi}(s))+\bar{z}(s,\bar{\xi}(s)))\}}\Phi(s),\] and consequently
$$ \Phi(s)\leq k |\bar{v}(s)|_E+  k |\bar{z}(s)|_E+\textrm{I}_{\{  | \bar{v}(s)|_E\leq  | \bar{z}(s)|_E \}}\Phi(s).$$
In particular
\begin{equation}\label{bms11}  \frac{d}{ds}^{-}|\bar{v}(s)|_E\leq k|\bar{v}(s)|_E+k\sup_{s\in [\lambda,t]}|\bar{z}(s)|_E + \textrm{I}_{\{ | \bar{v}(s)|_E\leq  | \bar{z}(s)|_E\}}\Phi(s).\end{equation}

Now, in \cite[Section 5]{Salins} it is proved that if  $\varphi $, $\psi$ and $\Phi$ are non-negative functions defined on $[\lambda,t]$, with $\varphi(\lambda)=0$ and 
    $$ \dfrac{d^-}{dt}\varphi(t)\leq c_1\varphi(t)+c_2+\Phi(t)\hbox{\rm{I}}_{\{\varphi(t)\leq \psi(t)\}},$$
    then 
     $$ \sup_{s \in\,[\lambda,t]}\varphi(s)\leq \left(c_2 (t-\lambda)+\sup_{s\in [\lambda,t]}\psi(s)\right)e^{c_1 (t-\lambda)}.$$
Thus, if we apply  this  to \eqref{bms11}, with 
\[\varphi(s)=| \bar{v}(s)|_E,\ \ \psi(s)=| \bar{z}(s)|_E,\ \ c_1=k,\ \ c_2= \sup_{s\in [\lambda,t]}|\bar{z}(s)|_E,\] we get
\begin{equation} \label{bms12}\sup_{s\in [\lambda,t]}|\bar{v}(s)|_E\leq (k(t-\lambda)+1)\sup_{s\in [\lambda,t]}|\bar{z}(s)|_E\,e^{k (t-\lambda)}.\end{equation}
In particular, if $z_1=z_2$, this implies that $v_1=v_2$, so that $u_1=u_2$ and uniqueness holds for equation \eqref{SalinsMild}. 

As mentioned above, this allows to define  $\mathcal{M}_{\lambda,t}(z)$ as the unique solution of equation \eqref{SalinsMild}, associated with $z \in\,C([\lambda,t];E)$. Due to \eqref{bms12}, $\mathcal{M}_{\lambda,t}:C([\lambda,t];E)\to C([\lambda,t];E)$ is Lipschitz-continuous, with Lipschitz constant \[L_{\mathcal{M}_{\lambda,t}}\leq (k(t-\lambda)+1)\,e^{k (t-\lambda)}+1,\]
so that \eqref{bms13} follows.

\begin{Remark}
{\em Due to \eqref{bms10}, we have
\begin{equation}\label{bms21}\sup_{s \in\,[\lambda,t]}|\mathcal{M}_{\lambda,t}(z)(s)|_E\leq c_T \left( \sup_{s\in [\lambda,T]} |F(z(s))|_E+ \sup_{s\in [\lambda,t]}|z(s)|_E\right).\end{equation}
}	
\end{Remark}

\section{Proof of Theorem \ref{main-ex-un}}
\begin{proof}
Equation \eqref{mildmain} can be rewritten as
$$ X(s)=\mathcal{M}_{\lambda,t}(\sqrt{\e}\,\gamma_{\la, t}(X)+S(\cdot-\lambda)x)(s),\ \ \ \ \ \ \ s \in\,[\lambda,t],$$
where $\mathcal{M}_{\lambda, t}:C([\lambda,T];E)\to C([\lambda,T];E)$ is the Lipschitz-continuous mapping introduced in Theorem \ref{PropSalins} and $\gamma_{\lambda, t}$ is the mapping introduced in \eqref{sbm2}. In particular, we have the existence of a unique solution for equation \eqref{mildmain} once we prove that there exists $\epsilon_T>0$ such that the mapping
\[X \in\,\mathcal{H}^{\,p}_{\lambda,t}(E)\mapsto \Lambda_{\lambda,t}^{\e,x}(X):=\mathcal{M}_{\lambda, t}(\sqrt{\e}\,\gamma_{\la, t}(X)+S(\cdot-\lambda)x) \in\,\mathcal{H}^{\,p}_{\lambda,t}(E),\]
is a contraction, for all $\epsilon\leq \epsilon_T$.
According to Theorem \ref{PropSalins} and Proposition \ref{prop stoch conv}, for every  $X_1, X_2 \in \mathcal{H}^{\,p}_{\lambda,t}(E)$ and $0\leq \lambda<t\leq T$ we have
\begin{align*}
&\E\sup_{s\in [\lambda,t]}|\Lambda_{\lambda,t}^{\e,x}(X_1)(s)-\Lambda_{\lambda,t}^{\e,x}(X_2)(s)|_E^p\\[10pt]
&\quad \leq\E\sup_{s\in [\lambda,t]}|\mathcal{M}_{\lambda,t}(\sqrt{\e}\gamma_{\la, t}(X_1)+S(\cdot-\lambda)x)(s)-\mathcal{M}_{\lambda,t}(\sqrt{\e}\,\gamma_{\la, t}(X_2)+S(\cdot-\lambda)x)(s)|^p_E
\\[10pt]
&\quad \quad   \leq L_{\mathcal{M}}^p(T)\, \e^{p/2}\,\E\sup_{s\in [\lambda,t]}|\gamma_{\la, t}(X_1)(s)-\gamma_{\la, t}(X_2)(s)|^p_E\leq L_{\mathcal{M}}^p(T)\,
{L_{\gamma}^p(T,p)} \e^{p/2} |X_1-X_2|^p_{ \mathcal{H}^{\,p}_{\lambda,t}(E)},
\end{align*}
and our claim follows once we take
\[\e_{T,p}<( L_{\mathcal{M}}(T) 
\,{L_{\gamma}(T,p)})^{-2}.\]

In the same way, for every $x_1, x_2 \in\,E$, $\epsilon\leq \epsilon_{T,p}$ and $0\leq \lambda<t\leq T$, we have
\begin{align*}
&|X^{t,x_1}_{\epsilon,\lambda}-X^{t,x_2}_{\epsilon,\lambda}|_{ \mathcal{H}^{\,p}_{\lambda,t}(E)}\\[10pt]
&\quad \quad  =	\left|\mathcal{M}_{\lambda,t}(\sqrt{\e}\,\gamma_{\la, t}(X^{t,x_1}_{\epsilon,\lambda})+S(\cdot-\lambda)x_1)-\mathcal{M}_{\lambda,t}(\sqrt{\e}\,\gamma_{\la, t}(X^{t,x_2}_{\epsilon,\lambda})+S(\cdot-\lambda)x_2)\right|_{\mathcal{H}^{\,p}_{\lambda,t}(E)}\\[10pt]
&\quad \quad \quad \quad \leq L_{\mathcal{M}}(T)\,L_{\gamma,p}(T)\,\left(\sqrt{\epsilon}\,|X^{t,x_1}_{\epsilon,\lambda}-X^{t,x_2}_{\epsilon,\lambda}|_{\mathcal{H}^{\,p}_{\lambda,t}(E)}+|x_1-x_2|_E\right).
\end{align*}
Hence, since $L_{\mathcal{M}}(T)\,L_{\gamma,p}(T)\sqrt{\epsilon}<1$, we get \eqref{bms6}.

\begin{Remark}
	{\em By using similar arguments, we can also prove that
	\begin{equation}
	\label{bms30}
\sup_{\epsilon\leq \epsilon_T}\mathbb{E}\sup_{s \in\,[\lambda,t]}\left|X^{t,x}_{\epsilon,\lambda}(s)\right|_E^p\leq c_{T,p}\,\left(1+|x|_E^p\right).	
	\end{equation}
}
\end{Remark}

 \end{proof}

\section{Proof of Theorem \ref{LDP}}
In this last section we give a proof of the large deviation principle, as stated in Theorem \ref{LDP}. We follow the  method based on weak convergence developed for SPDEs in \cite{bdm}. To this purpose, we need to introduce some notations.
If we define 
\[Q=\sum_{i=1}^{\infty} \lambda_i\, e_i \otimes e_i,\]
according to \eqref{bms1}, the operator $Q$ is the covariance of the noise $W_t$ and  $H_0:=Q^{1/2}(H)$ is its reproducing kernel. We have seen that the space $H_0$, endowed with the scalar product  
$$\langle h,k\rangle_{H_0}=\langle Q^{-1/2} h,Q^{-1/2}k\rangle_{H}=\sum_{i=1}^{\infty} \lambda_i^{-1}\,\langle e_i,h\rangle_{H}\langle e_i,k\rangle_{H}, $$
and the corresponding norm $\vert\cdot\vert_{H_0}$, is a Hilbert space.

For every $t>0$, we denote by $L^2_w(0,t;H_0)$ the space $L^2(0,T;H_0)$, endowed with the weak topology, and for every $M>0$ we introduce the set
\[\mathcal{S}_{t,M}:=\left\{ \varphi \in\,L^2_w(0,t;H_0)\ :\ \vert \varphi\vert_{L^2(0,t;H_0)}\leq M\right\}.\]
Moreover, we introduce the set
\[\Lambda_{t,M}:=\left\{ \varphi \in\,\mathcal{P}_t\ :\ \varphi \in \mathcal{S}_{t,M},\ \mathbb{P}-\text{a.s.}\right\},\]
where $\mathcal{P}_t$ is the set of predictable processes in $L^2(\Omega;L^2(0,t;H_0))$. 

\smallskip

Now, for every $\varphi \in \Lambda_{t,M}$ and $\e>0$, we consider the controlled equation
\begin{equation}
\label{stoch-pde.eps.LDP}
\left\{\begin{array}{l}
\begin{array}{l}
\ds{dX_{\e,\varphi}^{t,x}(s)= \left(A	X_{\e,\varphi}^{t,x}(s)+F(X_{\e,\varphi}^{t,x}(s))+ \sigma(X_{\e,\varphi}^{t,x}(s),v_{t,\e}(s,X_{\e,\varphi}^{t,x}(s)))\varphi(s) \right) \,ds}\\[10pt] 
\ds{ \quad \quad \quad \quad \quad \quad \quad \quad \quad +\sqrt{\e}\,\sigma(X_{\e,\varphi}^{t,x}(s),v_{t,\e}(s,X_{\e,\varphi}^{t,x}(s))) \,dW_s,}\\[8pt]
\ds{X_{\e,\varphi}^{t,x}(0)=x.}
	\end{array}	
\end{array}\right.
\end{equation}
where 
\begin{equation}\label{bms22-bis}v_{t,\e}(\lambda,y):=  \E(g(X^{t,y}_{\e,\lambda}(t)),\ \ \ \ \ \ 0\leq \lambda < t,\ \ \ y \in\,E,\end{equation}
 and $X^{t,y}_{\e,\lambda}$ is the solution of \eqref{mildmain} starting at time $\lambda$ from $y$, that is
 \begin{equation}
\begin{array}{l}
\ds{	X^{t,y}_{\e,\lambda}(s)=S(s-\lambda)y+\int_\lambda^s S(s-r)F(X^{t,y}_{\e,\lambda}(r))dr
}\\[14pt]
\ds{ \quad \quad \quad \quad \quad \quad \quad \quad \quad \quad \quad \quad  +\sqrt{\e}\,\int_\lambda^s S(s-r) \sigma(X^{t,y}_{\e,\lambda}(r),  \E(g(X^{t,y}_{\e,\lambda}(t))|\F_r))\,dW_r.}
\end{array}
\end{equation}
Notice that, according to Theorem \ref{main-ex-un}, for every $0\leq \lambda<t\leq T$, the mapping
$v_{t,\epsilon}(\lambda,\cdot):E\to E$ is Lipschitz continuous, uniformly with respect to $\epsilon\leq \e_{T,1}$. Actually, due to the Lipschitz-continuity of $g$   we have
\begin{align}\label{bms15}
	\vert v_{t,\epsilon}(\lambda,y_1)-v_{t,\epsilon}(\lambda,y_2)\vert_E\leq L_g\,\mathbb{E} \vert X^{t,y_1}_{\e,\lambda}(t)-X^{t,y_2}_{\e,\lambda}(t)\vert_E\leq L_g\,c_{T,1}\,|y_1-y_2|_E.
\end{align}

In what follows, we shall prove the existence and uniqueness of mild solutions for equation \eqref{stoch-pde.eps.LDP}.

\begin{Theorem}
\label{teo8.1}
Under Hypotheses \ref{H1}, \ref{H2} and \ref{H3}, there exists some $\e_{T,p}>0$ such that for every $\varphi \in  \Lambda_{t,M}$  and 
  $x\in E$ and for every $\epsilon\leq\epsilon_{T,p}$ 
there exists a unique mild solution  $X_{\e}^{t,x,\varphi} \in \mathcal{H}^{\,p}_{0,t}(E)$ to equation \eqref{stoch-pde.eps.LDP}. Namely
\begin{equation}\begin{array}{ll}\label{mild con phi}
    X_{\e}^{t,x,\varphi}(s)=\displaystyle\int_0^s S(s-r)F(X_{\e}^{t,x,\varphi}(r))dr + \int_0^s S(s-r)  \sigma(X_{\e}^{t,x,\varphi}(r), v_{t,\e}(r,X_{\e}^{t,x,\varphi}(r)))\varphi(r) \,dr
 \\ \\ \quad\quad\quad\quad\quad\quad\displaystyle+\sqrt{\e}\,\int_0^s S(s-r) \sigma(X_{\e}^{t,x,\varphi}(r), v_{t,\e}(r,X_{\e}^{t,x,\varphi}(r)))\,dW_r+ S(s)x.
\end{array}\end{equation} 
Moreover, we have  \begin{equation}
 \label{bms20}
 \sup_{\e\leq \e_{T,p}}\,\vert 	X_{\e}^{t,x,\varphi}\vert_{\mathcal{H}^{\,p}_{0,t}(E)}\leq c_{t,M,p}\left(1+|x|_E\right).
 \end{equation}
  
 \end{Theorem}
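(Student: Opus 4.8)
The plan is to follow the scheme of the proof of Theorem \ref{main-ex-un}, exploiting the fact that, once $v_{t,\e}$ has been constructed from the solution of \eqref{stoch-pde.existence}, it is a \emph{fixed} deterministic map which, by \eqref{bms15}, is Lipschitz continuous from $E$ into $E$ uniformly with respect to $s\in[0,t]$ and $\e\leq\e_{T,1}$. Consequently the composite coefficient $\Sigma(r,y):=\sigma(y,v_{t,\e}(r,y))$ is Lipschitz continuous and of linear growth in $y$, uniformly in $r$, and the diffusion term in \eqref{stoch-pde.eps.LDP} becomes \emph{local in time}, so that the obstruction described in the Introduction is no longer present. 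I would then recast the mild formulation \eqref{mild con phi}, through the Salins map of Theorem \ref{PropSalins}, as the fixed point problem $X=\mathcal{T}(X)$ with
$$\mathcal{T}(X):=\mathcal{M}_{0,t}\bigl(S(\cdot)x+D^\varphi(X)+\sqrt{\e}\,\gamma^v(X)\bigr),$$
where $\gamma^v(X)(s):=\int_0^s S(s-r)\Sigma(r,X(r))\,dW_r$ and $D^\varphi(X)(s):=\int_0^s S(s-r)\Sigma(r,X(r))\varphi(r)\,dr$, and solve it by the contraction principle in $\mathcal{H}^{\,p}_{0,t}(E)$. We may assume $p\geq\bar p$, the remaining range $1\leq p<\bar p$ following from H\"older's inequality.

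The contraction rests on two Lipschitz estimates. For $\gamma^v$ I would repeat verbatim the argument of Proposition \ref{prop stoch conv}: its only structural input is that the increment of the coefficient is a multiplication operator whose sup-norm is dominated by $\sup_s|X_1(s)-X_2(s)|_E$, and here
$$|\Sigma(r,X_1(r))-\Sigma(r,X_2(r))|_E\leq L_\sigma\bigl(1+L_g\,c_{T,1}\bigr)\,|X_1(r)-X_2(r)|_E,$$
by the Lipschitz continuity of $\sigma$ and of $v_{t,\e}(r,\cdot)$. Thus $\gamma^v$ maps $\mathcal{H}^{\,p}_{0,t}(E)$ into itself and is Lipschitz with a constant $L_{\gamma^v}(T,p)$ uniform in $t\leq T$; since the coefficient is local, the conditional-expectation step \eqref{stima delta sigma} is not even needed.

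The genuinely new point, and the main obstacle, is the control drift $D^\varphi$, which carries no factor $\sqrt{\e}$. Writing $w(r):=\Sigma(r,X_1(r))-\Sigma(r,X_2(r))\in E$ and expanding $\varphi(r)=\sum_i\varphi_i(r)e_i$ in the eigenbasis, one has $[S(s-r)(w(r)\varphi(r))](\xi)=\sum_i\varphi_i(r)\,[S(s-r)(w(r)e_i)](\xi)$, and pairing this series against $(\varphi_i(r))_i$ by Cauchy--Schwarz with the weights defining $|\cdot|_{H_0}$ yields
$$\bigl|[S(s-r)(w(r)\varphi(r))](\xi)\bigr|\leq |\varphi(r)|_{H_0}\,\Bigl(\sum_{i=1}^\infty\lambda_i\,|S(s-r)(w(r)e_i)(\xi)|^2\Bigr)^{1/2}.$$
The inner sum is exactly the quantity handled in Proposition \ref{prop stoch conv}: by the H\"older splitting $1/\theta+1/\zeta=1$ together with the kernel bounds \eqref{bms3} it is bounded by $c\,(s-r)^{-\kappa}\,|w(r)|_E^2$ for some exponent $\kappa<1$, the strict inequality $\kappa<1$ being precisely what Hypothesis \ref{H3} secures (through the constraint $\theta<d/(d-2)$ when $d\geq2$), which is the crux of the argument. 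Integrating in $r$ and applying Cauchy--Schwarz in time, using $|\varphi|_{L^2(0,t;H_0)}\leq M$ and $\kappa<1$, then gives, pathwise and for $\tau$ small,
$$\sup_{s\in[0,\tau]}\bigl|D^\varphi(X_1)(s)-D^\varphi(X_2)(s)\bigr|_E\leq c_\tau\,M\,\sup_{s\in[0,\tau]}|X_1(s)-X_2(s)|_E,$$
with $c_\tau\to0$ as $\tau\to0$, together with the analogous linear-growth bound $\sup_{s\in[0,\tau]}|D^\varphi(X)(s)|_E\leq c_\tau M\,(1+\sup_s|X(s)|_E)$ obtained from the linear growth of $\Sigma$.

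It remains to assemble these estimates using the Lipschitz bound \eqref{bms13} for $\mathcal{M}$. I would first fix $\tau=\tau(T,p,M)$ so small that $L_{\mathcal{M}}(\tau)\,c_\tau M<1/2$, and then choose $\e_{T,p}$, independent of $M$, so that $L_{\mathcal{M}}(T)\,\sqrt{\e_{T,p}}\,L_{\gamma^v}(T,p)<1/2$; the contraction constant of $\mathcal{T}$ on $\mathcal{H}^{\,p}_{0,\tau}(E)$ is then below $1$, which gives existence and uniqueness on $[0,\tau]$. Since $\tau$ is uniform, I would cover $[0,t]\subseteq[0,T]$ by the finitely many intervals $[k\tau,(k+1)\tau]$, restarting from the value already constructed and gluing the pieces through the semigroup property and uniqueness. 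Finally, the a priori bound \eqref{bms20} follows, as for \eqref{bms30}, by comparing $\mathcal{T}(X)$ with the bounded deterministic trajectory $\mathcal{M}_{0,t}(0)$ (controlled via Lemma \ref{Salins a priori}) rather than through the growth bound \eqref{bms21}: from $|X|_{\mathcal{H}^{\,p}_{0,\tau}(E)}\leq|\mathcal{M}_{0,t}(0)|_{\mathcal{H}^{\,p}_{0,\tau}(E)}+L_{\mathcal{M}}(T)\bigl(|S(\cdot)x|+|D^\varphi(X)|+\sqrt{\e}\,|\gamma^v(X)|\bigr)_{\mathcal{H}^{\,p}_{0,\tau}(E)}$ and the linear-growth bounds for $D^\varphi$ and $\gamma^v$, the sub-unit contraction constant lets one absorb the $|X|$-terms on the left and obtain $|X|_{\mathcal{H}^{\,p}_{0,\tau}(E)}\leq c_{\tau,M,p}(1+|x|_E)$ on each step, which propagates over the finitely many subintervals to yield \eqref{bms20}.
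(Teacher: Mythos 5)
Your proposal is correct and follows essentially the same route as the paper: both reduce \eqref{mild con phi} to a fixed point of the Salins map $\mathcal{M}$ composed with the convolution terms, both exploit that $v_{t,\e}(r,\cdot)$ is a fixed Lipschitz map (via \eqref{bms15}) so the coefficient becomes local in time, both control the control-drift term by pairing against the $H_0$-weights and invoking the kernel bounds together with Hypothesis \ref{H3} to get an exponent $d/(2\zeta)<1$, and both conclude by contraction on short subintervals of uniform length followed by gluing. The only differences are organizational: the paper bundles the drift and stochastic integrals into a single map $\gamma^{\varphi,\e}_{t,\tau,\tau+\kappa}$ and gains smallness of both from the short interval $\bar\kappa$ (treating the drift via the factorization formula through $U^{\bar\alpha,\varphi}_{\e,t}$), whereas you split them, take $\e$ small for the stochastic part and $\tau$ small for the drift, and estimate the drift directly — computations that are the same at heart.
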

 
 \

Next, as mentioned in Section \ref{main}, for every $\varphi \in\,L^2(0,t;H_0)$ and $x \in\,E$, we introduce the  deterministic controlled problem 
\begin{equation}
\label{sbm2-bis}
\frac {dX}{ds}(s)=A	X(s)+F(X(s))+\sigma(X(s),  v_{t}(s,X(s)))\varphi(s),\ \ \ \ X(0)=x,	\end{equation}
where
\begin{equation}\label{bms22} v_{t}(\rho,y) =  g(Y_\rho^{y}(t)),\ \ \ \ \ \ \rho \in\,[0,t],\ \ \ \ y \in\,E,\end{equation}
 and $Y^{y}_\rho$ is the solution of the equation
\begin{equation}\label{mildlimite}
Y(s)=\int_\rho^s S(s-r)F(Y(r))dr
+ S(s-\rho)y, \ \ \ \ \ s\in\,[\rho,t].
\end{equation}

In what follows, we shall prove that the following result holds.
\begin{Proposition}
\label{prop1}
  For  every  $t>0$ and $\varphi \in\,L^2(0,t;H_0)$ and for every $x \in\,E$, there exists a unique mild solution $X^{t,x,\varphi}\in\,C([0,t];E)$ for equation \eqref{sbm2-bis}.	Namely
 \[X^{t,x,\varphi}(s)=S(s)x+\int_0^s e^{(s-r)A}F(X^{t,x,\varphi}(r))\,dr+\int_0^s  e^{(s-r)A}\sigma(X^{t,x,\varphi}(r),v_{t}(r,X^{t,x,\varphi}(r)))\,\varphi(r)\,dr,\]
 for every $s \in\,[0,t]$. Moreover,
 \begin{equation}\label{bms28}
 |X^{t,x,\varphi}|_{C([0,t];E)}\leq c_{T,M}(1+|x|_E).
 \end{equation}

\end{Proposition}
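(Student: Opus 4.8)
The plan is to recast the mild formulation \eqref{sbm2-bis} as a fixed-point problem for Salins' solution map $\mathcal M_{0,t}$ and to run a contraction argument, exactly as in the proof of Theorem \ref{main-ex-un}, but with the stochastic convolution $\gamma_{\lambda,t}$ replaced by the deterministic control term. First I would record the properties of $v_t$. Since $Y^y_\rho$ is precisely $\mathcal M_{\rho,t}(S(\cdot-\rho)y)$, Theorem \ref{PropSalins}, together with $|S(s)y|_E\le|y|_E$ and the Lipschitz continuity of $g$, shows that $v_t(\rho,\cdot):E\to E\times\mathbb R$ is Lipschitz continuous with constant $L_gL_{\mathcal M}(T)$, uniformly in $\rho\in[0,t]$, and has linear growth; its continuity in $\rho$ follows from the continuous dependence of $Y^y_\rho$ on $\rho$. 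Writing $\mathcal C(X)(s):=\int_0^s S(s-r)\,\sigma(X(r),v_t(r,X(r)))\varphi(r)\,dr$, the equation \eqref{sbm2-bis} becomes the fixed-point identity $X=\mathcal M_{0,t}\big(S(\cdot)x+\mathcal C(X)\big)$.

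The core estimate is that $\mathcal C$ maps $C([0,t];E)$ into itself and is Lipschitz. For this I would reuse the pointwise bound established inside Proposition \ref{prop stoch conv}, namely $\sum_i\lambda_i|S(s-r)(\sigma e_i)(\xi)|^2\le c\,(s-r)^{-d/(2\zeta)}|\sigma(\cdots)|_E^2$ with $\tfrac1\theta+\tfrac1\zeta=1$ and $\theta<d/(d-2)$, so that $\zeta>d/2$. Expanding $\varphi(r)=\sum_i\varphi_i(r)e_i$ and applying the Cauchy--Schwarz inequality in $i$ gives $|S(s-r)(\sigma\varphi(r))(\xi)|\le c\,(s-r)^{-d/(4\zeta)}|\varphi(r)|_{H_0}|\sigma(\cdots)|_E$; since $d/(4\zeta)<1/2$, a further Cauchy--Schwarz in time against $\varphi\in L^2(0,t;H_0)$ yields $\sup_s|\mathcal C(X)(s)|_E\le c_t(1+\sup_s|X(s)|_E)\,|\varphi|_{L^2(0,t;H_0)}$, and the same computation applied to $\sigma(X_1,v_t(\cdot,X_1))-\sigma(X_2,v_t(\cdot,X_2))$ (using the uniform Lipschitz continuity of $v_t$) gives the Lipschitz bound. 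Membership in $E$ and continuity in $s$ follow from the smoothing estimate \eqref{stima-reg-semigruppo} into $W^{\eta,p}(\mathcal O)\hookrightarrow E$, exactly as above; the argument is the deterministic and simpler counterpart of Proposition \ref{prop stoch conv}.

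With these bounds, existence and uniqueness follow by a local contraction plus a patching procedure. On a subinterval of length $\tau$ the Lipschitz constant of $\mathcal C$ is controlled by $c\,\tau^{(1-d/(2\zeta))/2}\,|\varphi|_{L^2(0,t;H_0)}$, which, combined with the uniform bound \eqref{bms13} on $L_{\mathcal M}$, makes the map $X\mapsto\mathcal M(S(\cdot)x+\mathcal C(X))$ a contraction once $\tau=\tau_0$ is chosen small enough, depending only on $T$ and $|\varphi|_{L^2(0,t;H_0)}$. Since $v_t$ is defined globally through the terminal time $t$ and does not change under restarting, the same step length $\tau_0$ works on every subinterval, and finitely many steps cover $[0,t]$, producing the unique mild solution $X^{t,x,\varphi}\in C([0,t];E)$.

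For the a priori bound \eqref{bms28} I would set $\Phi:=\mathcal C(X)$ and $Y:=X-\Phi$, so that $Y$ solves $\partial_sY=\mathcal AY+f(Y+\Phi)$ with $Y(0)=x$, and run the maximum-principle estimate of Lemma \ref{Salins a priori} on $Y$: evaluating at a point where $|Y(s)|_E$ is attained and using the one-sided bound \eqref{ipotisi su f} gives, in the relevant case, the \emph{linear} differential inequality $\tfrac{d^-}{ds}|Y(s)|_E\le|f(0)|+|k|\,|Y(s)|_E+|k|\,|\Phi(s)|_E$ (and otherwise $|Y(s)|_E\le|\Phi(s)|_E$), which crucially avoids the uncontrolled quantity $\sup|f|$. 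Inserting $|\Phi(s)|_E\le c\int_0^s(s-r)^{-d/(4\zeta)}|\varphi(r)|_{H_0}(1+|X(r)|_E)\,dr$ and applying a singular Gronwall lemma then yields $\sup_{[0,t]}|X|_E\le c_{T,M}(1+|x|_E)$. I expect this last step to be the main obstacle: because $f$ is allowed arbitrary growth, the control term is meaningful only through the smoothing (convolution) estimate and lives in $L^2$ in time, so it cannot be treated by the pointwise maximum-principle estimate that the dissipativity of $f$ naturally provides. Reconciling these two mechanisms — isolating the control part in $\Phi$ and using dissipativity only on $Y=X-\Phi$, together with a singular Gronwall inequality — is the delicate point.
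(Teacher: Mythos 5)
Your construction of the solution follows essentially the paper's route: the paper's Step~1 establishes the uniform Lipschitz bound $|v_t(\rho,y_1)-v_t(\rho,y_2)|_E\le L_g L_{\mathcal M}(T)\,|y_1-y_2|_E$ exactly as you do, and Steps~2--3 recast \eqref{sbm2-bis} as a fixed point of $X\mapsto\mathcal M_{\tau,\tau+\kappa}\bigl(\gamma^{\varphi}_{t,\tau,\tau+\kappa}(X)+S(\cdot-\tau)x\bigr)$, show that the deterministic convolution is Lipschitz with constant $c\,\kappa^{\delta}$ on intervals of length $\kappa$ (via the factorization formula and the term $U^{\alpha,\varphi}$ from the proof of Theorem \ref{teo8.1}, rather than your direct Cauchy--Schwarz in $i$ and in time --- an immaterial difference, since both rest on the same kernel bound from Proposition \ref{prop stoch conv}), and then patch finitely many subintervals. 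Where you genuinely diverge is the a priori bound \eqref{bms28}. You identify it as the main obstacle and propose a maximum-principle estimate on $Y=X-\mathcal C(X)$ plus a singular Gronwall inequality; this can be made to work (your sign dichotomy is the one used to prove the Lipschitz continuity of $\mathcal M$, and bounding $\sign(Y)f(Y+\Phi)\le |f(0)|+|k|\,|Y+\Phi|$ in the case $\sign(Y)=\sign(Y+\Phi)$ correctly avoids any two-sided use of \eqref{ipotisi su f}), but it is considerably heavier than what the paper does, and you would additionally have to adjust the quoted differential inequality for the nonzero initial datum $Y(0)=x$. The paper instead gets \eqref{bms28} for free from the contraction already in hand: on $[0,\bar\kappa]$ one writes
\[
|X|\le \bigl|\mathcal M\bigl(\gamma^{\varphi}(X)+S(\cdot)x\bigr)-\mathcal M\bigl(\gamma^{\varphi}(0)\bigr)\bigr|+\bigl|\mathcal M\bigl(\gamma^{\varphi}(0)\bigr)\bigr|\le \tfrac12\,|X|+L_{\mathcal M}(T)\,|x|_E+c_T\bigl(|F(\gamma^{\varphi}(0))|_E+|\gamma^{\varphi}(0)|_E\bigr),
\]
using \eqref{bms21}; the arbitrary growth of $f$ is harmless here because $F$ is evaluated only at the single reference trajectory $\gamma^{\varphi}(0)$, whose sup norm is bounded by a constant depending on $T$ and $M$, and iterating over the $[t/\bar\kappa]$ subintervals yields \eqref{bms28}. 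So your argument is correct in outline, but the contraction-based shortcut buys a one-line a priori estimate where you anticipate a delicate Gronwall analysis.
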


Once proved Theorem \ref{teo8.1}  and Proposition \ref{prop1}, 
we introduce the action functional 
\begin{equation}
\label{sbm1}
I_{t,x}(X)=\frac 12\,\inf\left\{\int_0^t\vert \varphi(s)\vert_{H_0}^2\,ds\,:\, X(s)=	X^{t,x,\varphi}(s),\ s \in\,[0,t]\right\}.
\end{equation}
Moreover, we introduce
the
following two conditions.
\begin{enumerate}
\item[C1.] Let $\{\varphi_\epsilon\}_{\epsilon>0}$ be an arbitrary family of processes in $ \Lambda_{t,M}$ such that 
\[\lim_{\e\to 0} \varphi_\e=\varphi,\ \ \ \ \text{in distribution in}\ \ \ L_w^2(0,t;H_0).\]
Then we have
\[\lim_{\e\to 0} X_\e^{t,x,\varphi_{\e}}=X^{t,x,\varphi},\ \ \ \ \text{in distribution in}\ \ \ C([0,t],E).\]
\item[C2.] For every $t, R>0$,  the level sets $\Phi_{t,R}=\{I_{t,x}	\leq R\}$ are compact in the space $C([0,t];E)$.
\end{enumerate}

As established in \cite{bdm}, Conditions C1. and C2. ensure that the family $\{X^{t,x}_{\epsilon}\}_{\epsilon\in\,(0,\e_T)}$ satisfies a Laplace Principle on the space $C([0,t];H)$ with  action functional $I_{t,x}$. It is well known that when $I_{t,x}$ possesses compact level sets, the Laplace principle and the large deviation principle  with the same action functional are equivalent. Therefore, since Condition C2. guarantees the compactness of the level sets $\Phi_{t, R}$, verifying Conditions C1. and C2. is effectively equivalent to proving the validity of a large deviation principle for the  family $\{X^{t,x}_{\epsilon}\}_{\epsilon\in\,(0,\e_T)}$, with action functional $I_{t,x}$.

\subsection{Proof of Theorem \ref{teo8.1}}

Given an arbitrary $\tau \in [0,t)$, $\kappa >0$ and $X \in \mathcal{H}^p_{\tau,\tau+\kappa}(E)$, we define 
\begin{align*}
&\gamma_{t,\tau,\tau+\kappa}^{\varphi,\epsilon}(X)(s):=\sqrt{\e} \int_{\tau}^s S(s-r) \sigma(X(r),  v_{t,\e}(r,X(r)))\,dW_r\\[10pt]
&\quad\quad\quad\quad\quad\quad\quad\quad\quad\quad+\int_{\tau}^s S(s-r) \sigma(X(r),  v_{t,\e}(r,X(r)))\,\varphi(r) dr.	
\end{align*}
{\em Step 1.} We claim that there exists $\bar{\kappa}>0$ such that for every $\epsilon \in\,(0,{\e_{T,p}})$ and $\tau \in\,[0,t)$ and every $X_1, X_2 \in\,\mathcal{H}^p_{\tau,\tau+\bar{\kappa}}(E)$ it holds
\begin{equation}\label{bms18} \vert \gamma_{t,\tau,\tau+\bar{\kappa}}^{\varphi,\epsilon}(X_1)-\gamma_{t,\tau,\tau+\bar{\kappa}}^{\varphi,\epsilon}(X_2) \vert_{ \mathcal{H}^p_{\tau,\tau+\kappa}(E)} \leq \frac{1}{2\,L_{\mathcal{M}}(T)}\,
 \vert X_1-X_2 \vert_{\mathcal{H}^p_{\tau,\tau+\bar{\kappa}}(E) }.\end{equation}

If we denote \[\delta\gamma_{t,\tau,\tau+\kappa}^{\varphi,\epsilon}:=\gamma_{t,\tau,\tau+\kappa}^{\varphi,\epsilon}(X_1)-\gamma_{t,\tau,\tau+\kappa}^{\varphi,\e}(X_2),\] for every $\alpha\in (0,1/2)$,
we have
\begin{align*}
&\delta\gamma_{t,\tau,\tau+\kappa}^{\varphi,\epsilon}(s) =  \dfrac{\sin \pi \alpha}{\pi}\, \sqrt{\e}\int_{\tau}^s (s-r)^{\alpha-1}S(s-r)V_{\e,t}^{\alpha}(r)dr\\[10pt]
&\quad \quad \quad \quad \quad \quad \quad \quad +\dfrac{\sin \pi \alpha}{\pi} \int_{\tau}^s (s-r)^{\alpha-1}S(s-r)U_{\e,t}^{\alpha,\varphi}(r)dr,	
\end{align*} 
where
\[\displaystyle V_{\e,t}^{\alpha}(r):=  \int_{\tau}^r (r-\rho)^{-\alpha}S(r-\rho)Y_{\epsilon,t}(\rho)\,dW_{\rho},\ \ \ \ \ \displaystyle \,U_{\e,t}^{\alpha,\varphi}(r):=\int_{\tau}^r (r-\rho)^{-\alpha}S(r-\rho)Y_{\epsilon,t}(\rho)\,\varphi(\rho)\, d\rho,\] and 
\begin{equation*}
  Y_{\epsilon,t}(\rho) :=  \sigma(X_1(\rho),  v_{t,\e}(\rho,X_1(\rho)))- \sigma(X_2(\rho),  v_{t,\e}(\rho,X_2(\rho))).
\end{equation*}
By proceeding as in the proof of Proposition \ref{prop stoch conv}, we can find $\bar{\alpha} \in\,(0,1/2)$ and $\bar{p}\geq 1$ such that for every $p\geq \bar{p}$
\begin{equation}\label{stimagammav_alpha-bis}
  \sup_{s\in [\tau,\tau+\kappa]}  | \delta \gamma_{t,\tau,\tau+\kappa}^{\varphi,\epsilon}(s)|^p_E \leq c_{\bar{\alpha},p,t} \int_\tau^{\tau+\kappa} |V_{\e,t}^{\bar{\alpha}}(r)|^p_{L^p(\mathcal{O})}dr + c_{\bar{\alpha},p,t} \int_\tau^{\tau+\kappa} |U_{\e,t}^{\bar{\alpha},\varphi}(r)|^p_{L^p(\mathcal{O})}\,dr.\end{equation}
    By proceeding as in the proof of \eqref{stimaconvalpha}, we have
$$   \mathbb{E}|V_{\e,t}^{\bar{\alpha}}(r,\xi)|^p \leq c  \,\int_\tau^r(r-\rho)^{-2\bar{\alpha}-\frac{d}{2\zeta}}\,d\rho \int_\tau^r \mathbb{E}\,|Y_{\epsilon, t}(\rho)|^p_E\,d\rho.$$
Therefore, due to the  Lipschitz-continuity of $\sigma$ and $v_{t,\e}$ (see \eqref{bms15}), there exists some $\delta>0$ such that 
    \begin{equation}
        \label{stima V_1}
 \sup_{\epsilon\leq \e_T}\mathbb{E}\int_\tau^{\tau+\kappa}|V_{\e, t}^{\bar{\alpha}}(r)|^p_{L^p(\mathcal{O})}\,dr
    \leq c_T\, \kappa^{\delta} \vert X_2-X_1 \vert_{\mathcal{H}^p_{\tau,\tau+\kappa}(E)}^p.
    \end{equation}
 Next, if we define  
 \[\varphi_i(s):= \lambda_i^{-1/2}\langle\varphi(s),e_i\rangle_H,\ \ \ \ \ i\in \mathbb{N},\ \ \ \ \ s \in\,(0,t),\]
  we have
$$U_{\e,t}^{\bar{\alpha},\varphi}(r,\xi)=\sum_{i=1}^{\infty} \sqrt{\la_i}\int_\tau^r (r-\rho)^{-\bar{\alpha}}S(r-\rho)(Y_{\epsilon,t}(\rho)e_i) (\xi)\,\varphi_i(\rho) d \rho,\quad \quad \quad r\in [0,t],\ \  \ \xi\in \Oc.$$
Since \[\sum_{i=1}^{\infty} |\varphi_i(\rho)|^2=|\varphi(\rho)|^2_{H_0},\] according to Hypothesis \ref{H3} we have
\begin{align}\label{stima v 1-bis}
\nonumber &|U_{\e,t}^{\bar{\alpha},\varphi}(r,\xi)|^2 \leq c  \left(\int_{\tau}^r \left(\sum_{i=1}^{\infty}\la_i (r-\rho)^{-2\bar{\alpha}}\left|S(r-\rho)(Y_{\epsilon,t}(\rho)e_i) (\xi)\right|^2 \right)^{1/2}|\varphi(\rho)|_{H_0} d\rho\right)^2  \\[10pt]
\nonumber&\leq c  \,\sum_{i=1}^{\infty} \la_i |e_i|_{L^{\infty}(\mathcal{O})}^ {2/\theta}\int_{\tau}^r (r-\rho)^{-2\bar{\alpha}}\left|S(r-\rho)(Y_{\epsilon,t}(\rho)e_i) (\xi)\right|^2  |e_i|_{L^{\infty}(\mathcal{O})}^ {-2/\theta}d\rho  \int_{\tau}^r |\varphi(\rho)|_{H_0}^2 d\rho\\[10pt]
\nonumber & \leq c  \,  M^2 \left(\sum_{i=1}^{\infty} \la_i^{\theta} |e_i|_{L^{\infty}(\mathcal{O})}^ {2}\right)^{1/\theta}\int_{\tau}^r (r-\rho)^{-2\bar{\alpha}} \left( \sum_{i=1}^{\infty}\left|S(r-\rho)(Y_{\epsilon,t}(\rho)e_i) (\xi)\right|^{ 2\zeta} |e_i|_{L^{\infty}(\mathcal{O})}^ {-2 (\zeta-1)}\right)^{1/\zeta}  d\rho\\[10pt]
 & \leq c_M  \,  \int_{\tau}^r (r-\rho)^{-2\bar{\alpha}} A_{\e,t}^{1/\zeta}(r,\rho,\xi)  d\rho,
\end{align}
where $ \frac{1}{\theta} + \frac{1}{\zeta}=1$ and 
$ A_\epsilon(r,\rho,\xi)$ is defined as in \eqref{def-A} with $Y_t$ replaced by $Y_{\e,t}$.  By proceeding as in the proof of \eqref{bms17}, we have 
$$ A_{\epsilon,t}(r,\rho,\xi)\leq \,  (r-\rho)^{-\frac{d}{2}}|Y_{\epsilon,t}(\rho)|_E^{2\zeta},$$
and if we plug the inequality above into \eqref{stima v 1-bis}, we get
\[|U_{\e,t}^{\bar{\alpha},\varphi}(r,\xi)|^p\leq c_M\,\left(\int_{\tau}^r (r-\rho)^{-2\bar{\alpha}-\frac d{2\zeta}} |Y_{\e,t}(\rho)|_E^2  d\rho\right)^{p/2}.\]
Now, due to the Lipschitz-continuity of $\sigma$ and \eqref{bms15},
we have
\[\sup_{\e\leq \e_T}\mathbb{E}\,|Y_{\e,t}(\rho)|_E^p\leq c_t\,\vert X_1-X_2\vert^p_{\mathcal{H}^{\,p}_{\tau,\tau+\kappa}(E)},\]
so that
\begin{align*}
&\int_\tau^{\tau+\kappa}|U^{\bar{\alpha},\varphi}_{\e,t}(r)|_{L^p(\mathcal{O})}^p\,dr \leq c_M  \, \int_{\tau}^{\tau+\kappa} r^{-2\bar{\alpha}-\frac{d}{2\zeta}}\,dr  \int_{\tau}^{\tau+\kappa} |Y_{\epsilon,t}(\rho)|^p_E\,d\rho\\[10pt]
&\quad \quad \quad \quad \quad \quad \quad \quad \quad \quad \quad \quad \quad \quad \quad \quad \leq c_{M,T}\, \kappa^{2-2\bar{\alpha}-\frac{d}{2\zeta}}\vert X_1-X_2\vert^p_{\mathcal{H}^{\,p}_{\tau,\tau+\kappa}(E)}. 	
\end{align*}

 In view of the inequality above and \eqref{stima V_1}, from \eqref{stimagammav_alpha-bis} we get
\begin{equation} \label{contrazione con phi}
     \mathbb{E} \sup_{s\in [\tau,\tau+\kappa]}| \delta \gamma_{t,\tau,\tau+\kappa}
     ^{\varphi,\epsilon}(s)|^p_E \leq   c_{M,T}\, \kappa^{2-2\bar{\alpha}-\frac{d}{2\zeta}} \vert X_2-X_1 \vert_{\mathcal{H}^{\,p}_{\tau,\tau+\kappa}(E)}^p,
\end{equation}
so that we can find $\bar{\kappa}=\bar{\kappa}_{M,T}>0$, independent of $\tau$, $x$ and $\e\leq \e_{T,p}$, such that \eqref{bms18} holds for every $X_1, X_2 \in\,\mathcal{H}^{\,p}_{\tau,\tau+\kappa}(E)$.

{\em Step 2.} To complete the proof of the existence and uniqueness of solution, we start by considering   equation \eqref{mild con phi}  in $\mathcal{H}^{\,p}_{0,\bar{\kappa}}(E)$, which can be written as
\[X=\mathcal{M}_{0,\bar{\kappa}}(\gamma^{\varphi,\e}_{t,0,\bar{\kappa}}(X)+S(\cdot)x).\]
 Taking into account of \eqref{bms18} and Theorem \ref{PropSalins} we immediately deduce that  the map
\[X\in\,\mathcal{H}^{\,p}_{0,\bar{\kappa}}(E)\mapsto  \mathcal{M}_{0,\bar{\kappa}}(\gamma^{\varphi,\e}_{t,0,\bar{\kappa}}(X)+S(\cdot-\bar{\kappa})x) \in\,\mathcal{H}^{\,p}_{\bar{\kappa},2\,\bar{\kappa}}(E)\]
is a contraction and its fixed point is the unique solution $X_{\e}^{t,x,\varphi}$ of equation \eqref{mild con phi} in $[0,\bar{\kappa}]$. Next, we consider the mapping
\[X\in\,\mathcal{H}^{\,p}_{\bar{\kappa},2\bar{\kappa}}(E)\mapsto  \mathcal{M}_{\bar{\kappa}, 2\bar{\kappa}}(\gamma_{t,\bar{\kappa},2\bar{\kappa}}^{\varphi,\epsilon}(X)+S(\cdot)X_{\e}^{t,x,\varphi}(\bar{\kappa})) \in\,\mathcal{H}^{\,p}_{0,\bar{\kappa}}(E),\]
which, due again to \eqref{bms18} and Theorem \ref{PropSalins} has a unique fixed point.  
Thus, we can proceed iteratively considering the equation in all intervals $[i\bar{\kappa},(i+1) \bar{\kappa}]$, for $i=0,\ldots,[t/\bar{\kappa}]$, and we get a unique solution for equation \eqref{mild con phi} in $[0,t]$.

{\em Step 3.} In order to prove \eqref{bms20}, we note that
\begin{align*}
	&\vert X^{t,x,\varphi}_{\epsilon}\vert_{\mathcal{H}^p_{0,\bar{\kappa}}(E)}\leq \vert \mathcal{M}_{0,\bar{\kappa}}(\gamma^{\varphi,\e}_{t,0,\bar{\kappa}}(X^{t,x,\varphi}_{\epsilon})+S(\cdot)x)-\mathcal{M}_{0,\bar{\kappa}}(\gamma^{\varphi,\e}_{t,0,\bar{\kappa}}(0))\vert_{\mathcal{H}^p_{0,\bar{\kappa}}(E)}\\[10pt]
	&\quad \quad \quad \quad \quad \quad \quad \quad \quad \quad \quad +\vert \mathcal{M}_{0,\bar{\kappa}}(\gamma^{\varphi,\e}_{t,0,\bar{\kappa}}(0))\vert_{\mathcal{H}^p_{0,\bar{\kappa}}(E)},
\end{align*}
so that, thanks to the Lipschitz-continuity of $\mathcal{M}_{0,\bar{\kappa}}$, \eqref{bms21} and \eqref{bms18}, we get
\begin{align*}
	& \vert X^{t,x,\varphi}_{\epsilon}\vert_{\mathcal{H}^p_{0,\bar{\kappa}}(E)}\\[10pt]
	&\quad \leq L_{\mathcal{M}}(T)\,\left(\vert \gamma^{\varphi,\e}_{t,0,\bar{\kappa}}(X^{t,x,\varphi}_{\epsilon})-\gamma^{\varphi,\e}_{t,0,\bar{\kappa}}(0)\vert_{\mathcal{H}^p_{0,\bar{\kappa}}(E)}+|x|_E\right)+c_T\left(|F(\gamma^{\varphi,\e}_{t,0,\bar{\kappa}}(0))|_E+\vert \gamma^{\varphi,\e}_{t,0,\bar{\kappa}}(0)\vert_E\right)\\[10pt]
	& \quad \quad \leq \frac 12 \vert X^{t,x,\varphi}_{\epsilon}\vert_{\mathcal{H}^p_{0,\bar{\kappa}}(E)}+L_{\mathcal{M}}(T)\,|x|_E+c_T\left(|F(\gamma^{\varphi,\e}_{t,0,\bar{\kappa}}(0))|_E+\vert \gamma^{\varphi,\e}_{t,0,\bar{\kappa}}(0)\vert_E\right).
\end{align*}
This implies 
\[\vert X^{t,x,\varphi}_{\epsilon}\vert_{\mathcal{H}^p_{0,\bar{\kappa}}(E)}\leq 2\,L_{\mathcal{M}}(T)\,|x|_E+2\,c_T\,|F(\gamma^{\varphi,\e}_{t,0,\bar{\kappa}}(0))|_E.\]
As the same can be repeated in all intervals $[i\bar{\kappa},(i+1)\bar{\kappa}]$, for all $i=0,\ldots,[t/\bar{\kappa}]$, we conclude that \eqref{bms20} holds.

\subsection{Proof of Proposition \ref{prop1}} 

{\em Step 1.} If $v_t$ is the function introduced in \eqref{bms22}, then  the mapping
\[y \in\,E\mapsto v_t(\rho,y) \in\,E,\]
 is Lipschitz-continuous, uniformly with respect to $\rho \in\,[0,t]$.
 
 Actually, due to the Lipschitz-continuity of $g$, we have
 \[|v_t(\rho,y_1)-v_t(\rho,v_2)|_E\leq L_g\,|Y^{y_1}_\rho(t)-Y^{y_2}_\rho(t)|_E.\]
With the notation introduced in Theorem \ref{PropSalins}, we have
\[ Y^{y_1}_\rho(t)-Y^{y_2}_\rho(t)=\mathcal{M}_{\rho,t}(S(\cdot-\rho)y_1)(t)-\mathcal{M}_{\rho,t}(S(\cdot-\rho)y_2)(t),\]
so that, according to Theorem \ref{PropSalins}, we have
\begin{equation}
\label{bms23prima}
|v_t(\rho,y_1)-v_t(\rho,v_2)|_E\leq L_g L_{\mathcal{M}}(T)\,|y_1-y_2|_E\ \ \ \ \ \rho \in\,[0,t].	
\end{equation}

\smallskip

{\em Step 2.} By using again the notations introduced in Theorem \ref{PropSalins}, we have
that $X^{t,x,\varphi}$ is a mild solution of equation \eqref{sbm2-bis} in the interval $[\tau,\tau+\kappa]$ if and only if
it is a fixed point of the mapping
\begin{equation}\label{bms23}X \in\,C([\tau,\tau+\kappa];E)\mapsto \mathcal{M}_{\tau,\tau+\kappa}(\gamma_{t,\tau,\tau+\kappa}^\varphi(X)+S(\cdot-\tau)x) \in\,C([\tau,\tau+\kappa];E),\end{equation}
where
\[\gamma_{t,\tau,\tau+\kappa}^\varphi(X)(s):= \int_\tau^s  e^{(s-r)A}\sigma(X(r),v_{t}(r,X(r)))\,\varphi(r)\,dr,\ \ \ \ \ \ s \in\,[\tau,\tau+\kappa].\]
By proceeding with arguments analogous to those used in Step 1. of the proof of Theorem \ref{teo8.1}, we have that for every $X_1, X_2 \in\,C([\tau,\tau+\kappa];E)$
\[|\gamma_{t,\tau,\tau+\kappa}^\varphi(X_1)-\gamma_{t,\tau,\tau+\kappa}^\varphi(X_2)|_{C([\tau,\tau+\kappa];E)}\leq c\,\kappa^{\delta}\,|X_1-X_2|_{C([\tau,\tau+\kappa];E)},\]
for some $\delta>0$, so that, due to the Lipschitz-continuity of $\mathcal{M}_{\tau,\tau+\kappa}$ we can find $\bar{\kappa}$, independent of $x \in\,E$ and $\tau$ such that the mapping introduced in \eqref{bms23} is a contraction and hence admits a unique fixed point. 

\smallskip

{\em Step 3.} By proceeding with a bootstrap argument as in Step 2. of  the proof of Theorem \ref{teo8.1}, we conclude that there exists a unique mild solution $X^{t,x,\varphi} \in\,C([0,t];E)$ for equation 
\eqref{sbm2-bis}. Moreover, by using arguments analogous to those used above, we have that estimate \eqref{bms28} holds.

\subsection{The validity of condition C1.}
Thanks to Skorohod's theorem we rephrase  condition C1. in the following way. Let $(\bar{\Omega}, \bar{\mathcal{F}},\bar{\mathbb{P}})$ be a probability space and let $\{W^\e_r\}_{r \in\,[0,t]}$ be a  Wiener process, with covariance $Q$, defined on such  probability space and corresponding to the filtration $\{\bar{\mathcal{F}}_t\}_{t\geq 0}$. Moreover, let $\{\bar{\varphi}_\e\}_{\e>0}$ and $\bar{\varphi}$ be  $\{\bar{\mathcal{F}}_t\}_{t\geq 0}$-predictable processes  in $\Lambda_{t,M}$, such that  
\[\mathcal{L}(\bar{\varphi}_{\epsilon}, \bar{\varphi}, W^\epsilon)=\mathcal{L}(\varphi_\e,\varphi,W),\] and
 \[\lim_{\e\to 0}\bar{\varphi}_\e=\bar{\varphi}\ \ \ \text{ in } L_w^2(0,T;H_{0}),\ \ \ \ \bar{\mathbb{P}}-\text{a.s.}\]
 Then, if $\bar{X}^{t,x, \bar{\varphi}_\e}_{\epsilon}$ is the solution of an equation analogous to \eqref{stoch-pde.eps.LDP}, with $\varphi_\e$ and $W_r$ replaced respectively by $\bar{\varphi}_\e$ and $W_r^\epsilon$, we need to prove that 
 \begin{equation}
 \label{ca155}
 \lim_{\e\to 0} \bar{X}^{t,x, \bar{\varphi}_\e}_{\epsilon}=\bar{X}^{t,x, \bar{\varphi}}\ \ \ \text{in }C([0,t];E),\ \ \ 
\text{in distribution.}\end{equation}

 \smallskip
 
With the notations introduced in the proofs of Theorem \ref{teo8.1} and Proposition \ref{prop1}, for every $x\in\,E$ and $\tau \in\,[0,t)$ and $\kappa>0$, we have
 \[\begin{array}{l}
\ds{\bar{X}^{t,x, \bar{\varphi}_\e}_{\epsilon}(s)-\bar{X}^{t,x, \bar{\varphi}}(s)	}\\[10pt]
\ds{\quad \quad =\mathcal{M}_{\tau,\tau+\kappa}\left(\gamma^{\bar{\varphi}_\e,\e}_{t,\tau,\tau+\kappa}(\bar{X}^{t,x, \bar{\varphi}_\e}_{\epsilon})+S(\cdot)x\right)(s)-\mathcal{M}_{\tau,\tau+\kappa}\left(\gamma^{\bar{\varphi}}_{t,\tau,\tau+\kappa}(\bar{X}^{t,x, \bar{\varphi}})+S(\cdot)\right)(s),}
\end{array}\]
for every $s \in\,[\tau,\tau+\kappa]$. Thus,  the Lipschitz continuity of $\mathcal{M}_{\tau,\tau+\kappa}$ gives
\[|\bar{X}^{t,x, \bar{\varphi}_\e}_{\epsilon}-\bar{X}^{t,x, \bar{\varphi}}|_{C([\tau,\tau+\kappa];E)}\leq L_{\mathcal{M}}(T)\,|\gamma^{\bar{\varphi}_\e,\e}_{t,\tau,\tau+\kappa}(\bar{X}^{t,x, \bar{\varphi}_\e}_{\epsilon})-\gamma^{\bar{\varphi}}_{t,\tau,\tau+\kappa}(\bar{X}^{t,x, \bar{\varphi}})|_{C([\tau,\tau+\kappa];E)}.\]
Now, due to \eqref{contrazione con phi}, we have
\[\begin{array}{l}
\ds{\bar{\mathbb{E}}\,|\gamma^{\bar{\varphi}_\e,\e}_{t,\tau,\tau+\kappa}(\bar{X}^{t,x, \bar{\varphi}_\e}_{\epsilon})-\gamma^{\bar{\varphi}}_{t,\tau,\tau+\kappa}(\bar{X}^{t,x, \bar{\varphi}})|_{C([\tau,\tau+\kappa];E)}}\\[14pt]
\ds{\quad \quad \quad\leq \bar{\mathbb{E}}\,|\gamma^{\bar{\varphi}_\e,\e}_{t,\tau,\tau+\kappa}(\bar{X}^{t,x, \bar{\varphi}_\e}_{\epsilon})-\gamma^{\bar{\varphi}_\e,\e}_{t,\tau,\tau+\kappa}(\bar{X}^{t,x, \bar{\varphi}})|_{C([\tau,\tau+\kappa];E)}}\\[14pt]
\ds{\quad \quad \quad \quad \quad \quad\quad \quad \quad+\bar{\mathbb{E}}\,|\gamma^{\bar{\varphi}_\e,\e}_{t,\tau,\tau+\kappa}(\bar{X}^{t,x, \bar{\varphi}})-\gamma^{\bar{\varphi}}_{t,\tau,\tau+\kappa}(\bar{X}^{t,x, \bar{\varphi}})|_{C([\tau,\tau+\kappa];E)}}	\\[14pt]
\ds{\leq c_{M,T} \,\kappa^{\delta}\,\bar{\mathbb{E}}\,|\bar{X}^{t,x, \bar{\varphi}_\e}_{\epsilon}-\bar{X}^{t,x, \bar{\varphi}}|_{C([\tau,\tau+\kappa);E)}+\bar{\mathbb{E}}\,|\gamma^{\bar{\varphi}_\e,\e}_{t,\tau,\tau+\kappa}(\bar{X}^{t,x, \bar{\varphi}})-\gamma^{\bar{\varphi}}_{t,\tau,\tau+\kappa}(\bar{X}^{t,x, \bar{\varphi}})|_{C([\tau,\tau+\kappa];E)},}
\end{array}\]
for some $c_{M,T}$ independent of $\tau \in\,[0,t)$,  $x \in\,E$, and $\epsilon\leq \e_T:=\e_{T,1}.$
In particular, we can pick $\bar{\kappa}>0$ such that 
\begin{align*}
&\bar{\mathbb{E}}\,|\bar{X}^{t,x, \bar{\varphi}_\e}_{\epsilon}-\bar{X}^{t,x, \bar{\varphi}}|_{C([\tau,\tau+\bar{\kappa}];E)}\\[10pt]
& \quad \quad \leq 
\frac 12\,\bar{\mathbb{E}}\,|\bar{X}^{t,x, \bar{\varphi}_\e}_{\epsilon}-\bar{X}^{t,x, \bar{\varphi}}|_{C([\tau,\tau+\bar{\kappa}];E)} +L_{\mathcal{M}}(T)\,\bar{\mathbb{E}}\,|\gamma^{\bar{\varphi}_\e,\e}_{t,\tau,\tau+\kappa}(\bar{X}^{t,x, \bar{\varphi}})-\gamma^{\bar{\varphi}}_{t,\tau,\tau+\kappa}(\bar{X}^{t,x, \bar{\varphi}})|_{C([\tau,\tau+\kappa];E)}.	
\end{align*}
Therefore, if we prove that 
\begin{equation}\label{bms33}
\lim_{\e\to 0}	\bar{\mathbb{E}}|\gamma^{\bar{\varphi}_\e,\e}_{t,0,\bar{\kappa}}(\bar{X}^{t,x, \bar{\varphi}})-\gamma^{\bar{\varphi}}_{t,0,\bar{\kappa}}(\bar{X}^{t,x, \bar{\varphi}})|_{C([0,\bar{\kappa}];E)}=0,
\end{equation}
we get $
 \lim_{\e\to 0} \bar{X}^{t,x, \bar{\varphi}_\e}_{\epsilon}=\bar{X}^{t,x, \bar{\varphi}}$ in $L^1(\bar{\Omega}, \bar{\mathbb{P}},C([0,\bar{\kappa}];E)$, and 
\eqref{ca155} holds in the interval $[0,\bar{\kappa}]$.

We have
\begin{align*}
	&\gamma^{\bar{\varphi}_\e,\e}_{t,0,\bar{\kappa}}(\bar{X}^{t,x, \bar{\varphi}})(s)-\gamma^{\bar{\varphi}}_{t,0,\bar{\kappa}}(\bar{X}^{t,x, \bar{\varphi}})(s)=\sqrt{\e} \int_{0}^s S(s-r) \sigma(\bar{X}^{t,x, \bar{\varphi}}(r),  v_{t,\e}(r,\bar{X}^{t,x, \bar{\varphi}}(r)))\,dW_r\\[10pt]
	&\quad +\int_{0}^s S(s-r) \left(\sigma(\bar{X}^{t,x, \bar{\varphi}}(r),  v_{t,\e}(r,\bar{X}^{t,x, \bar{\varphi}}(r)))-\sigma(\bar{X}^{t,x, \bar{\varphi}}(r),  v_{t}(r,\bar{X}^{t,x, \bar{\varphi}}(r)))\right)\,\bar{\varphi}_\epsilon(r) dr\\[10pt]
	&\quad+\int_{0}^s S(s-r) \sigma(\bar{X}^{t,x, \bar{\varphi}}(r),  v_{t}(r,\bar{X}^{t,x, \bar{\varphi}}(r)))\,(\bar{\varphi}_\epsilon(r) -\bar{\varphi}(r))\,dr=:I^\e_1(s)+I^\e_2(s)+I^\e_3(s).
\end{align*}

By proceeding as in the proof of \eqref{contrazione con phi}, due to \eqref{bms28} we have
\begin{equation}\label{bms50}
\bar{\mathbb{E}}\,|I^\e_1|_{C([0,\bar{\kappa}];E)}\leq c_{T,M}\sqrt{\e}\left(1+|\bar{X}^{t,x, \bar{\varphi}}|_{C([0,\bar{\kappa}];E)}\right)\leq c_{T,M}\sqrt{\e}\left(1+|x|_E\right).	
\end{equation}
As for the second term $I^\e_2$, by proceeding as in Step 1. in the proof of Theorem \ref{teo8.1}, 
since for every $\e \in\,(0,\e_T)$
\[\int_0^t \vert \bar{\varphi}_\epsilon(r)\vert_{H_0}^2\,dr\leq M^2,\ \ \ \ \bar{\mathbb{P}}-\text{a.s.}\] 
we have
 \begin{align*}
	&|I^\e_2(s)|_E\leq c_{T,M}\int_0^s|\sigma(\bar{X}^{t,x, \bar{\varphi}}(r),  v_{t,\e}(r,\bar{X}^{t,x, \bar{\varphi}}(r)))-\sigma(\bar{X}^{t,x, \bar{\varphi}}(r),  v_{t}(r,\bar{X}^{t,x, \bar{\varphi}}(r)))|_E\,dr,
\end{align*} 
and then, due to the Lipschitz-continuity of $\sigma$ we get that for $p$ sufficiently large
\begin{align*}
	&|I^\e_2(s)|_E\leq c_{T,M}\,L_\sigma\int_0^s|v_{t,\e}(r,\bar{X}^{t,x, \bar{\varphi}}(r)))-v_{t}(r,\bar{X}^{t,x, \bar{\varphi}}(r))|_E\,dr.
	\end{align*}
Therefore, if we can prove that
\begin{equation}
\label{bms31}
\lim_{\e\to 0}	\int_0^s|v_{t,\e}(r,\bar{X}^{t,x, \bar{\varphi}}(r)))-v_{t}(r,\bar{X}^{t,x, \bar{\varphi}}(r))|_E\,dr=0,
\end{equation}
we conclude that
\begin{equation}\label{bms32}
\lim_{\e\to 0} 	|I^\e_2|_{C([0,\bar{\kappa}];E)}=0,\ \ \ \ \ \bar{\mathbb{P}}-\text{a.s.}
\end{equation}

Recalling how $v_{t,\epsilon}$ and $v_t$ were defined in \eqref{bms22} and \eqref{bms22-bis} respectively, we have
\begin{align*}
&|v_{t,\e}(r,y)-v_{t}(r,y)|_E\leq L_g\,\bar{\mathbb{E}}\,|X^{t,y}_{\epsilon,r}(t)-Y^y_r(t)|_E	\\[10pt]
&\quad \quad \quad \quad =L_g\,|\mathcal{M}_{0,t}(\sqrt{\e}\,\gamma_{0,t}(X^{t,y}_{\epsilon,r}))+S(t-\cdot)y)(t)-\mathcal{M}_{0,t}(S(t-\cdot)y)(t)|_E\\[10pt]
&\quad \quad \quad \quad \quad \quad \quad \quad \leq L_gL_{\mathcal{M}}(T)\,\sqrt{\epsilon}\,\bar{\mathbb{E}}\sup_{s \in\,[0,t]}|\gamma_{0,t}(X^{t,y}_{\e,r})(s)|_E.
\end{align*}
Hence, by using arguments analogous to those used in the proof of Proposition \ref{prop stoch conv}, thanks to \eqref{bms30} we get
\[|v_{t,\e}(r,y)-v_{t}(r,y)|_E\leq \sqrt{\e}\,L_gL_{\mathcal{M}}(T)c_{T}\left(1+|y|_E\right).\]
This implies
\[\int_0^s|v_{t,\e}(r,\bar{X}^{t,x, \bar{\varphi}}(r)))-v_{t}(r,\bar{X}^{t,x, \bar{\varphi}}(r))|_E\,dr\leq \sqrt{\e}\,L_gL_{\mathcal{M}}(T)c_{T}\left(1+\sup_{r \in\,[0,s]}|\bar{X}^{t,x, \bar{\varphi}}(r)|_E\right),\]
so that, thanks to \eqref{bms28}, we obtain \eqref{bms31}  and \eqref{bms32} follows. 

Next, concerning $I^\e_3$, for every $\alpha \in\,(0,1/2)$ we have
\[I^\e_3(s)=\frac{\sin \pi \alpha}{\pi}\int_0^s
(s-r)^{\alpha-1}S(s-r)Y_{\alpha,\epsilon}(r)\,dr,\]
where
\[Y_{\alpha,\epsilon}(r):=\int_0^r (r-\rho)^{-\alpha}S(r-\rho)h(\rho)\left(\bar{\varphi}_\epsilon(\rho) -\bar{\varphi}(\rho)\right)\,d\rho,\]
and
\[h(\rho):=\sigma(\bar{X}^{t,x, \bar{\varphi}}(\rho),  v_{t}(\rho,\bar{X}^{t,x, \bar{\varphi}}(\rho))).\]
This implies that if we fix $\eta>0$ and $p\geq 1$ such that
\[\frac \eta 2+\frac 1p<\alpha,\]
we have
\begin{align*}
&\vert I^\e_3(s)\vert_{W^{\eta,p}(\mathcal{O})}\leq c_\alpha\int_0^s (s-r)^{\alpha-1-\frac\eta 2}\vert Y_{\alpha,\epsilon}(r)\vert_{L^p(\mathcal{O})}\,dr\\[10pt]
&\leq c_\alpha\left(\int_0^s (s-r)^{(\alpha-1-\frac\eta 2)\frac p{p-1}}\,dr\right)^{\frac{p-1}p}\left(\int_0^s\vert Y_{\alpha,\epsilon}(r)\vert^p_{L^p(\mathcal{O})}\,dr\right)^{\frac 1p}=: c_{\a,p}(s)\left(\int_0^s\vert Y_{\alpha,\epsilon}(r)\vert^p_{L^p(\mathcal{O})}\,dr\right)^{\frac 1p}.
	\end{align*}
In particular, if we pick $p>d/\eta$ we have 
\[\sup_{s \in\,[0,t]}\,\vert I^\epsilon_3\vert _E\leq c \sup_{s \in\,[0,t]}\,\vert I^\e_3\vert_{W^{\eta,p}(\mathcal{O})}\leq c_{\a,p}(t)\left(\int_0^t\vert Y_{\alpha,\epsilon}(r)\vert^p_{L^p(\mathcal{O})}\,dr\right)^{\frac 1p}.\]
This means that if we show that
\begin{equation}\label{fine1}
\lim_{\e\to 0}\int_0^t\bar{\mathbb{E}}\,\vert Y_{\alpha,\epsilon}(r)\vert^p_{L^p(\mathcal{O})}\,dr=0,	
\end{equation}
we can conclude that 
\begin{equation}
	\label{terzo}
	\lim_{\e\to0}  \bar{\mathbb{E}}\vert I^\epsilon_3\vert_{C([0,\bar{\kappa}];E)}=0.
\end{equation}

For every $(r,\xi) \in\,[0,t]\times \mathcal{O}$, we have  
\begin{align*}
	&Y_{\alpha,\e}(r,\xi)=\sum_{i=1}^\infty \int_{0}^r(r-\rho)^{-\alpha}  S(r-\rho)(h e_i)(\xi)\,\langle\bar{\varphi}_\epsilon(\rho) -\bar{\varphi}(\rho),e_i\rangle_H\,d\rho,
\end{align*}
so that
\begin{align}\label{fine3}
\nonumber&\vert Y_{\alpha,\e}(r,\xi)\vert^2\leq \sum_{i=1}^\infty \lambda_i\int_{0}^r(r-\rho)^{-2\alpha}  \vert S(r-\rho)(h e_i)(\xi)\vert^2\,d\rho\,\sum_{i=1}^\infty\frac{1}{\lambda_i}\int_0^t\vert \langle\bar{\varphi}_\epsilon(\rho) -\bar{\varphi}(\rho),e_i\rangle_H\vert^2\,d\rho\\[10pt]
&\quad \quad \quad \quad \quad \quad \quad \quad \leq c\,M^2\,\sum_{i=1}^\infty \lambda_i\int_{0}^r(r-\rho)^{-2\alpha}  \vert S(r-\rho)(h e_i)(\xi)\vert^2\,d\rho.\end{align}
By proceeding as in the proof of Proposition \ref{prop stoch conv}, we can show that 
\[\sum_{i=1}^\infty \lambda_i\int_{0}^r(r-\rho)^{-2\alpha}  \vert S(r-\rho)(h e_i)(\xi)\vert^2\,d\rho\leq c\,\int_0^r(r-\rho)^{-2\alpha-\frac{d}{2\zeta}}\vert h(\rho)\vert_E^2\,d\rho,\]
where, we recall,
$\zeta=\theta/(\theta-1)$ and $\theta$ is the constant introduced in Hypothesis \ref{H3}.
Since both $\sigma$ and $v_t$ have linear growth, according to \eqref{bms28} we have
\[\sup_{\rho \in\,[0,t]}\vert h(\rho)\vert_E\leq c_{\,T,M}\left(1+\vert x\vert_E\right).\]
Thus, if we pick $\bar{\alpha}>0$ small enough so that 
$2\bar{\alpha}+d/(2\zeta)<1$, we conclude that
\begin{equation}\label{fine4}\sum_{i=1}^\infty \lambda_i\int_{0}^r(r-\rho)^{-2\alpha}  \vert S(r-\rho)(h e_i)(\xi)\vert^2\,d\rho\leq c_{\,T,M}\left(1+\vert x\vert_E\right).\end{equation}
This implies that if we take an arbitrary $\delta>0$, we can find $i_\delta \in\,\mathbb{N}$ such that
\[\vert Y_{\alpha,\e}(r,\xi)\vert\leq c  \sum_{i\leq i_\delta} \big|\int_{0}^r(r-\rho)^{-\bar{\alpha}}  S(r-\rho)(h e_i)(\xi)\,\langle\bar{\varphi}_\epsilon(\rho) -\bar{\varphi}(\rho),e_i\rangle_H\,d\rho\big|+\delta.\]

Since $\bar{\alpha}<1/2$, we have that, for every fixed $(r,\xi) \in\,[0,t]\times \mathcal{O}$, the mapping
\[(\rho,\eta) \in\,[0,t]\times \mathcal{O}\mapsto \mathbb{I}_{[0,r]}(\rho)(r-\rho)^{-\alpha} S(r-\rho)(h e_i)(\xi)e_i(\eta) \in\,\mathbb{R},\]
belongs to $L^2(0,t;H)$, and hence, as $\bar{\varphi_\epsilon}$ converges weakly to $\bar{\varphi}$ in $L^2(0,t;H)$, due to the arbitrariness of $\delta>0$ we conclude that 
\[\lim_{\e\to 0} \vert Y_{\alpha,\e}(r,\xi)\vert=0.\]
Moreover, thanks 
to \eqref{fine3} and \eqref{fine4}, we have 
\[\vert Y_{\alpha,\e}(r,\xi)\vert^2 \leq c_{\,T,M},\ \ \ \ \ (r,\xi) \in\,[0,t]\times \mathcal{O},\]
and the dominated convergence theorem implies \eqref{fine1}, which, as we have seen above, implies \eqref{terzo}.

Finally, if we combine together  \eqref{bms50}, \eqref{bms32} and \eqref{terzo} we conclude that \eqref{bms33} holds  in $[0,\bar{\kappa}]$. As the same arguments can be repeated in all intervals $[i\bar{\kappa},(i+1)\bar{\kappa}]$, we have that  
\[\lim_{\e\to 0} \bar{\mathbb{E}}|\bar{X}^{t,x, \bar{\varphi}_\e}_{\epsilon}-\bar{X}^{t,x, \bar{\varphi}}|_{C([0,t];E)}=0,\]
and
\eqref{ca155} follows.
This concludes the proof of the validity of condition C1.

\subsection{The validity of condition C2.} Condition C2. is a special case of condition C1. and follows from analogous arguments. For more details see \cite[Subsection 8.4]{CGT}.

\bibliographystyle{plain}

\end{document}